% Thu Aug 13 9:08 PDT 2015
% journal version for G&C, last edited by Csaba
% action items: marked with xxx

% 16 pdf figure files:
% intro-example, construction-monotone & construction-straight, construction-star
% lowerbound-M2, lowerbound-Mk, construction-paths, construction-paths-planar,
% flipquad-in & flipquad-out, shift-in & shift-out, construction-min,
% construction-monotone-min, v123, directed-fan

\documentclass[11pt]{article}

\usepackage{graphicx}
\usepackage{graphics}
\usepackage{amssymb}
\usepackage{amsmath}
\usepackage{amsthm}
\usepackage{psfrag}
\usepackage{enumerate}

\usepackage{epsfig}
\usepackage{epsf}
\usepackage{amsfonts}
\usepackage{latexsym}
\usepackage{fancyvrb}
\usepackage[small]{caption}
\usepackage{url}
\usepackage {float}
\usepackage{subfigure}

\setlength{\oddsidemargin}{0in}
\setlength{\evensidemargin}{0in}
\setlength{\topmargin}{0in}
\setlength{\headheight}{0in}
\setlength{\headsep}{0in}
\setlength{\textwidth}{6.5in}
\setlength{\textheight}{9in}

\newtheorem{theorem}{Theorem}
\newtheorem{lemma}{Lemma}
\newtheorem{corollary}{Corollary}

\newtheorem {claim}{Claim}

\newcommand{\conv}{{\rm conv}}

\newcommand{\len}{{\rm len}}
\newcommand{\degree}{{\rm deg}}

\newcommand{\etal}{{et~al.}}
\newcommand{\ie}{{i.e.}}
\newcommand{\eg}{{e.g.}}

\newcommand{\old}[1]{{}}
\newcommand{\later}[1]{{}}

\graphicspath{{figures/}}

%----------------------- Title -------------------------------------------

\title{Counting Carambolas\footnote{A preliminary version of this paper appeared in the
Proceedings of the 25th Canadian Conference on Computational Geometry~\cite{LST13}.}}

\author
{Adrian Dumitrescu\thanks{Department of Computer Science,
University of Wisconsin--Milwaukee, USA\@.
Email: \texttt{dumitres@uwm.edu}}
\and
Maarten L\"offler\thanks{Department of Computing and Information Sciences,
Utrecht University,  Utrecht, the Netherlands. Email: \texttt{m.loffler@uu.nl}}
\and
Andr\'e Schulz\thanks{LG Theoretische Informatik,
FernUniversit\"at Hagen, Germany. Email: \texttt{andre.schulz@fernuni-hagen.de}}
\and
Csaba D. T\'oth\thanks{Department of Mathematics, California State
  University Northridge, Los Angeles, CA, USA; and
  Department of Computer Science, Tufts University, Medford, MA, USA.
  Email: \texttt{cdtoth@acm.org}}
}

\begin{document}

\maketitle

\begin {abstract}
We give upper and lower bounds on the maximum and minimum number of
geometric configurations of various kinds present (as subgraphs) in a
triangulation of $n$ points in the plane. Configurations of interest include
\emph{convex polygons}, \emph{star-shaped polygons} and \emph{monotone paths}.
We also consider related problems for \emph{directed} planar straight-line graphs.

\bigskip
\textbf{\small Keywords}: convex polygon, star-shaped polygon, monotone path,
plane graph, triangulation, counting.
\end {abstract}

\section {Introduction} \label{sec:intro}

We consider \emph{plane straight-line graphs} (also referred to as
\emph{planar geometric graphs}),
where the vertices are points in the plane and the edges are line
segments between the corresponding points,
no two of which intersect except at common endpoints.
According to a classical result of Ajtai~\etal~\cite{ACNS82}, the
number of plane straight-line graphs on $n$ points in
the plane is $O(c^n)$, where $c$ is a large absolute constant.

Problems in extremal graph theory typically ask for the minimum or maximum number
of certain subgraphs, \eg, perfect matchings, spanning trees or
spanning paths, contained in a graph of a given order.
Here we consider extremal problems in plane straight-line graphs,
where the classes of subgraphs are defined geometrically (\ie, membership
in the class depends on the coordinates of the vertices). For instance,
van~Kreveld, L\"offler, and Pach~\cite{KLP12} studied the number of convex polygons
(cycles) in geometric triangulations on $n$ points (vertices). They constructed
$n$-vertex triangulations containing $\Omega(1.5028^n)$ convex polygons, and
proved that every triangulation on $n$ points in the plane contains $O(1.6181^n)$
convex polygons. Dumitrescu and T\'oth~\cite{DT15} subsequently sharpened the upper bound
to $O(1.5029^n)$, thereby almost closing the gap between the upper and lower bounds.
\begin{figure}[htpb]
\centering
\includegraphics[scale=0.9]{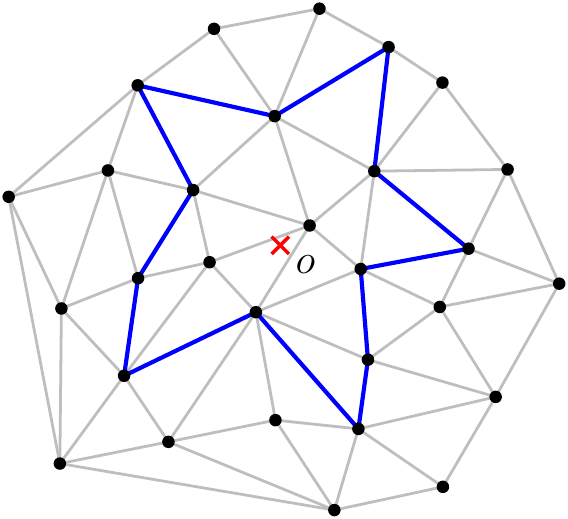}
\caption{A ``carambola'' in a triangulation.}
\label{fig:intro-example}
\end{figure}

In this paper we continue this research direction and investigate the
multiplicities of other geometrically defined subgraphs present in a
geometric triangulation, such as star-shaped polygons and monotone paths.
A \emph{star-shaped} polygon (a.k.a. \emph{carambola}, see
Figure~\ref{fig:intro-example}) is a simple polygon $P$ such that there
is a (center) point $o$ in its interior with the property that every
ray emanating from $o$ intersects the boundary of $P$ in exactly one
point. As we will see, star-shaped polygons are closely related to
monotone paths.

Let $\mathbf{u}\in \mathbb{R}^2\setminus \{\mathbf{0}\}$ be a nonzero vector
(here $\mathbf{0}=(0,0)$).
A polygonal path $\xi=(v_1,v_2,\ldots, v_t)$ is \emph{monotone in direction $\mathbf{u}$},
if every line orthogonal to $\mathbf{u}$ intersects $\xi$ in at most one point.
Equivalently, a path $\xi$ is \emph{monotone} in direction $\mathbf{u}$,
if every directed edge of $\xi$ has a positive scalar product with $\mathbf{u}$,
that is, $\langle \overrightarrow{v_iv_{i+1}},\mathbf{u}\rangle>0$ for $i=1,\ldots, t-1$.
A special case is an $x$-monotone path, which is monotone in the horizontal direction
$\mathbf{u} =(1,0)$.
A path $\xi=(v_1,v_2,\ldots, v_t)$ is \emph{monotone} if it is monotone in some direction
$\mathbf{u}\in \mathbb{R}^2\setminus \{\mathbf{0}\}$.
Monotone paths are traditionally used in optimization; a classic example is the simplex algorithm
for linear programming, which traces a monotone path on the 1-skeleton of a $d$-dimensional
polytope of feasible solutions.

Counting star-shaped polygons and monotone paths in a triangulation $T$ can be reduced
to counting directed cycles and paths, respectively, in some orientation of $T$.
Indeed, let $o$ be a center point and orient every edge $ab$ in $T$ as $(a,b)$ iff $\Delta{oab}$
is a clockwise triangle. Then the number of star-shaped polygons centered at $o$
equals the number of directed cycles. Similarly, let $\mathbf{u}\in
\mathbb{R}^2\setminus \{\mathbf{0}\}$ be a nonzero vector, and orient
every edge $ab$ in $T$ as $(a,b)$ iff $\langle \overrightarrow{ab},\mathbf{u}\rangle>0$.
Then the number of $\mathbf{u}$-monotone paths in $T$ equals the number of directed paths.
To see the role played by the geometric constraints in the definition of these special
orientations, we also derive bounds on the maximum and minimum number of directed paths
in an oriented $n$-vertex triangulation.

\paragraph{Our results.}
In Sections~\ref{sec:lower} and \ref{sec:upper}, we derive lower and upper bounds,
respectively, on the \emph{maximum} number of subgraphs of a certain
kind that a triangulation on $n$ points can contain.
Table~\ref{tab:results} summarizes known and new results.
\begin {table} [htbp]
\centering
\begin{tabular}{|lll|}
\hline
Configurations & Lower bound & Upper bound\\
\hline
\hline
Convex polygons  & $\Omega(1.5028^n)$~\cite{KLP12} & $O(1.5029^n)$~\cite{DT15}\\
Star-shaped polygons & $\Omega(1.7003^n)$ &  $O(n^3\alpha^n)$\\
Monotone paths & $\Omega(1.7003^n)$ &  $O(n\alpha^n)$\\
Directed  simple paths       & $\Omega(\alpha^n)$ &  $O(n^2 3^n)$\\
\hline
\end{tabular}
\caption {Bounds for the maximum number of configurations in an $n$-vertex plane graph.
Results in row~1 are included for comparison; the bounds in rows 2-4 are proved in the paper.
Row~4 concerns directed graphs.
Note: $\alpha =1.8392\ldots$ is the unique real root of the cubic equation $x^3-x^2-x-1=0$.
\label {tab:results}}
\end {table}

In Section~\ref{sec:min}, we study the \emph{minimum} number of configurations
that a triangulation on $n$ vertices can contain. Our asymptotic bounds are summarized in
Table~\ref{tab:results-min}; more precise estimates are available in the respective section.
\begin {table} [htbp]
\centering
\begin{tabular}{|lll|}
\hline
Configurations & Lower bound & Upper bound\\
\hline
\hline
Convex polygons  & $\Omega(n)$ & $O(n)$\\
Star-shaped polygons & $\Omega(n)$ &  $O(n^2)$\\
Monotone paths & $\Omega(n^2)$ &  $O(n^{3.17})$\\
Directed  paths       & $\Omega(n)$ &  $O(n)$\\
\hline
\end{tabular}
\caption {Bounds for the minimum number of configurations in a triangulation
with $n$ vertices. Row~4 concerns directed triangulations.
\label {tab:results-min}}
\end {table}

\paragraph{Related work.}
Previous research studied the maximum number of cycles and spanning
trees in triangulations with $n$ vertices. Since cycles and spanning
trees, in general, have no geometric attributes, upper bounds on these
numbers hold for all edge-maximal planar graph, \ie, combinatorial triangulations.
Buchin~\etal~\cite{BKK07} showed that every triangulation with $n$ vertices
contains $O(2.8928^n)$  simple cycles, and there are triangulations that contain
$\Omega(2.4262^n)$ simple cycles and $\Omega(2.0845^n)$ Hamiltonian cycles.
Buchin and Schulz~\cite{BS10} proved that every $n$-vertex
triangulation contains $O(5.2852^n)$ spanning trees. These techniques
are instrumental for bounding the total number of noncrossing Hamiltonian
cycles and spanning trees that $n$ points in the plane admit~\cite{HSS13,SSW13}.
Some recent lower bounds on these numbers appear in~\cite{DSST13}; see
also~\cite{AHV+06,DT12,HSS13} and the references therein for other related results.

\section {Lower bounds on the maximum number of subgraphs} \label{sec:lower}

\subsection{Monotone paths}

We construct plane straight-line graphs on $n$ vertices that contain
$\Omega(1.7003^n)$ $x$-monotone paths; see Figure~\ref{fig:construction-monotone}.
Thus by orienting all edges from left to right, we obtain a directed plane straight-line graph
that contains $\Omega(1.7003^n)$ directed paths.
The same construction yields lower bounds for a few related subgraphs.
By arranging three copies of this graph around the origin in a cyclic fashion
as shown in Figure~\ref{fig:construction-star}, we obtain a plane straight-line graph
that contains $\Omega(1.7003^n)$ star-shaped polygons.
By connecting the leftmost and rightmost vertices by two extra edges
to a common vertex, we obtain a plane straight-line graph that contains
$\Omega(1.7003^n)$ monotone polygons, (defined as usual; see~\eg,~\cite[p.~49]{BCKO08}).
\begin{figure}[htpb]
\centering
\includegraphics[scale=1.2]{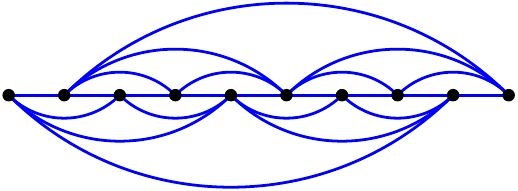}
\hspace{.5in}
\includegraphics[scale=1.2]{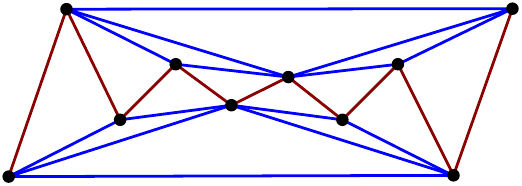}
\caption{Left: a graph on $n=2^\ell+2$ vertices (here $\ell=3$), that
  contains $\Omega(1.7003^n)$ monotone paths, for $n$ sufficiently large.
Right: a~straight-line embedding of the same graph where the points
lie alternately on two circular arcs, while preserving $x$-monotonicity.}
\label{fig:construction-monotone}
\end{figure}

Let $n=2^\ell+2$ for an integer $\ell\in \mathbb{N}$. We define a
plane graph $G$ on $n$ vertices $V=\{v_1,\ldots , v_n\}$: it consists
of a path $\pi = (v_1,\ldots , v_n)$ and two balanced binary triangulations
of the vertices $\{v_1,\ldots, v_{n-1}\}$ and $\{v_2,\ldots , v_n\}$,
respectively, one on each side of the path; see Figure~\ref{fig:construction-monotone}\,(left).
Specifically, $G$ contains an edge $(v_i,v_{i+2^k})$, for $1\leq i \leq n-2^k$, if and only if
$i-1$ or $i-2$ is a multiple of $2^k$. A straight-line embedding is shown in
Figure~\ref{fig:construction-monotone}\,(right), where the odd and respectively
the even vertices lie on two convex polygonal chains with opposite orientations.

\begin{theorem}\label{thm:lb}
  The graph $G$ described in the preceding paragraph has
  $\Omega(1.7003^n)$ $x$-monotone paths.
\end{theorem}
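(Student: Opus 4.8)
The plan is to count $x$-monotone paths of $G$ directly, by a recursion mirroring the recursive structure of the two balanced binary triangulations. In the embedding of Figure~\ref{fig:construction-monotone}(right) the points $v_1,\dots,v_n$ have strictly increasing $x$-coordinates, so an $x$-monotone path of $G$ is the same thing as a set of edges forming a path $v_{i_1},v_{i_2},\dots,v_{i_t}$ with $i_1<i_2<\dots<i_t$; I will therefore lower-bound the number of such \emph{index-increasing} paths in $G$. The key structural observation is that, away from its two extreme vertices, $G$ is self-similar: for a dyadic index interval $I=[a,a+2^k]$ occurring in the recursive construction of either triangulation, the subgraph of $G$ induced on $\{v_a,\dots,v_{a+2^k}\}$ consists of one full balanced binary triangulation of $I$ together with all but a few of the edges of a second balanced binary triangulation shifted by one index — so up to its two boundary vertices it is a scaled copy of the whole gadget. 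For such an interval I would track a small vector of counts: the number $N_k$ of index-increasing paths it contains, the numbers $L_k$ and $R_k$ of such paths anchored at its left, respectively right, endpoint, the number $S_k$ of such paths joining the two endpoints, and a few further ``anchored'' counts recording the endpoints of the $O(\log n)$ long edges that will cross the midpoint of an enclosing interval.

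The heart of the argument is a recursion expressing the level-$k$ counts in terms of the level-$(k-1)$ counts obtained by splitting $I$ at its midpoint $m=a+2^{k-1}$ into halves $I_L,I_R$. An index-increasing path in the induced subgraph on $I$ lies entirely in $I_L$, lies entirely in $I_R$, or crosses the cut at $m$; a crossing path is the concatenation of a path in $I_L$ ending at a crossing vertex with a path in $I_R$ starting at a crossing vertex, where the crossing is either the shared vertex $v_m$ or one of the long edges of $G$ that jump over $m$. This produces relations of the schematic shape
\begin{equation*}
N_k \;=\; 2\,N_{k-1}\;+\;\Theta(1)\cdot R_{k-1}\,L_{k-1}\;+\;(\text{lower-order terms}),
\end{equation*}
together with analogous quadratic-plus-lower-order relations for $L_k,R_k,S_k$ and the auxiliary counts. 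Since the quadratic terms dominate, all of these counts grow doubly exponentially in $k$: the normalized quantities $N_k^{1/2^k}$ are nondecreasing in $k$ and converge to a common limit $\gamma>1$, and likewise for the other coordinates. Because $n=2^\ell+2$, the number of index-increasing paths in $G$ is then $\gamma^{(1-o(1))\,2^\ell}=\gamma^{(1-o(1))n}=\Omega\bigl((\gamma-\eps)^n\bigr)$ for every fixed $\eps>0$.

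It then remains to certify $\gamma\ge 1.7003$. Since $N_k^{1/2^k}$ increases to $\gamma$, it suffices to unroll the recursion from its trivial base case (an interval of two vertices, where all counts are $O(1)$) for a fixed, sufficiently large number of levels and to evaluate $N_k^{1/2^k}$ there — a finite computation. I expect the main obstacle to be the first step above, namely writing the recursion down correctly: because the two balanced binary triangulations are offset by one index, splitting at $m$ does \emph{not} separate the edge set into ``left half, right half, and one bridge edge'', as it would for a single binary triangulation, so one must carefully account for which long edges cross the cut and how the boundary vertices of each interval interact with their neighbours — this is precisely what forces the enlarged state vector. Once the recursion is written down, checking that the quadratic terms dominate and that the resulting growth rate exceeds $1.7003$ is routine.
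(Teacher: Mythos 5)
Your decomposition is genuinely different from the paper's. The paper does not recurse on dyadic intervals: it fixes a small $k$, keeps only the edges of length at most $2^k$, partitions the resulting graph $G_k$ into overlapping groups of $2^{k-1}+2$ consecutive vertices sharing two vertices each, and runs a $2\times 2$ transfer matrix $M_k$ along the groups; the matrices obey $M_k=M_{k-1}^2+C$ for a fixed $0/1$ correction matrix $C$, and the constant $1.7003$ is read off as $\lambda^{1/2^{k-1}}$ for the largest eigenvalue $\lambda$ of $M_4$. Your doubling recursion on induced dyadic blocks of the full graph is a plausible alternative, and the self-similarity you invoke is real (in particular, the long edges crossing a dyadic midpoint $v_m$ inside its block form a fan into the single vertex $v_{m+1}$, which keeps the extra bookkeeping finite).

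As written, however, the proposal has a genuine gap: it produces neither the recursion nor the number. The entire quantitative content of the theorem is the constant $1.7003$, and you reduce it to ``write the recursion down correctly'' and ``evaluate it for finitely many levels,'' both of which are deferred --- and the first of which you yourself flag as the main obstacle. Until that computation is carried out, the argument shows only that the path count is $\gamma^{(1-o(1))n}$ for some $\gamma>1$, not that $\gamma\geq 1.7003$. There is also a technical slip in the convergence step: from $N_k = 2N_{k-1}+\Theta(1)\,R_{k-1}L_{k-1}+\cdots$ one cannot conclude that $N_k^{1/2^k}$ is nondecreasing, since $L_{k-1},R_{k-1}\leq N_{k-1}$ makes the quadratic term \emph{at most} $N_{k-1}^2$ and yields no inequality of the form $N_k\geq N_{k-1}^2$. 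The supermultiplicativity should instead be run on the endpoint-to-endpoint count $S_k$, for which concatenation at the shared midpoint genuinely gives $S_k\geq S_{k-1}^2$ and hence $S_k^{1/2^k}$ nondecreasing; $N_k$ is then controlled by $S_k$ up to a factor polynomial in $2^k$, because every index-increasing path in an induced dyadic block extends to an endpoint-to-endpoint one. With that repair, a single verified value $S_{k_0}^{1/2^{k_0}}\geq 1.7003$ for some fixed $k_0$ would immediately give the bound $\Omega(1.7003^{n})$ --- but that verification is precisely what is missing.
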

\begin{proof}
We count the number of $x$-monotone paths in a sequence of subgraphs of $G$. Let $G_0$
be the path $\pi= (v_1,\ldots , v_n)$; and we recursively define $G_k$ from $G_{k-1}$
by adding the edges $(v_i,v_{i+2^k})$ for $i=j2^k+1$ and $i=j2^k+2$
for $j=0,1,\ldots, 2^{\ell-k}-1$. The final graph is $G_\ell$.
Denote by $p_k(v_i)$ the number of $x$-monotone paths in $G_k$ that
end at vertex $v_i$. Since every monotone path can be extended to the
rightmost vertex $v_n$, the number of maximal (with respect to containment)
monotone paths in $G_k$ is $p_k(v_n)$.

We establish the following recurrence relations for $p_k(v_i)$. The
initial values are $p_k(v_1)=p_k(v_2)=1$ and $p_k(v_3)=2$ for all $k=0,\ldots,\ell$.
For $k=1$ and $i\geq 3$, we have $p_1(v_i)=p_1(v_{i-1})+p_1(v_{i-2})$,
therefore $p_1(v_i)=F_i$, where $F_i$ is the $i$th Fibonacci number.
It is well known that $F_i= \Theta(\phi^n)$, where $\phi=(1+\sqrt5)/2=1.6180\ldots$,
so $p_1(v_n)=\Theta(\phi^n)$.

The recurrence for $p_k(v_i)$, $k\geq 2$, is more nuanced, due to the asymmetry
between the triangulations on the two sides of the path~$\pi$.
We partition the edges of graph $G_k$ into groups, each induced by $2^{k-1}+2$
consecutive vertices (with respect to $\pi$), such that every two
consecutive groups share two vertices.
Let $a_i$ denote the first vertex of group~$i$, and let $b_i$ be the second vertex of group~$i$.
Let the edge $(a_i,b_i)$ belong to group $i$ but not to group $i-1$.
We count the number of ways one can route an $x$-monotone path through a group.
A path through group~$i$ starts at either $a_i$ or $b_i$, and ends at either
$a_{i+1}$ or $b_{i+1}$. Thus, it is enough to keep track of four different types of paths.
We record the number of paths from $a_i$ or $b_i$ to $a_{i+1}$ or $b_{i+1}$
in a $2\times 2$ matrix $M_k$, such that
\begin{equation*}
M_k\cdot (p_k(a_i),p_k(b_i))^T=(p_k(a_{i+1}),p_k(b_{i+1}))^T.
\end{equation*}

\begin{figure}[htpb]
\centering
\includegraphics[width=0.95\textwidth]{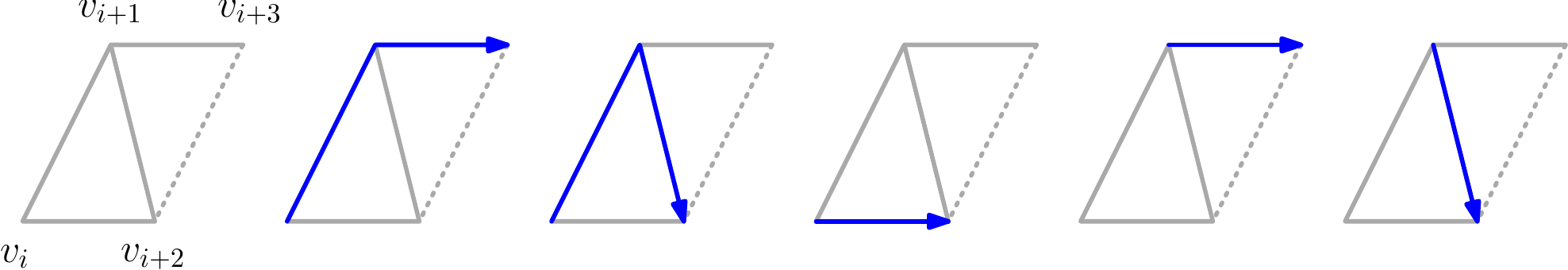}
\caption{The five possible $x$-monotone paths in the group of $G_2$.}
\label{fig:M2}
\end{figure}

Once the matrix $M_k$ is known, we can compute the number of paths by
$(p(v_{n-1}),p(v_n))^T=M_k^{(n-2)/2^{k-1}} \cdot (1,1)^T$.
By the Perron--Frobenius Theorem~\cite{HJ85},
$\lim_{q\to \infty} M_k^q/\lambda^q =  A$, for some matrix $A$, and
for $\lambda$ being the largest eigenvalue of $M_k$. Hence, we have
 $\lim_{n\to \infty}p_k(v_n)=\Theta(\lambda^{n/2^{k-1}})$ maximal $x$-monotone paths in $G_k$.

\begin{figure}[htpb]
\centering
\includegraphics[width=0.7\textwidth]{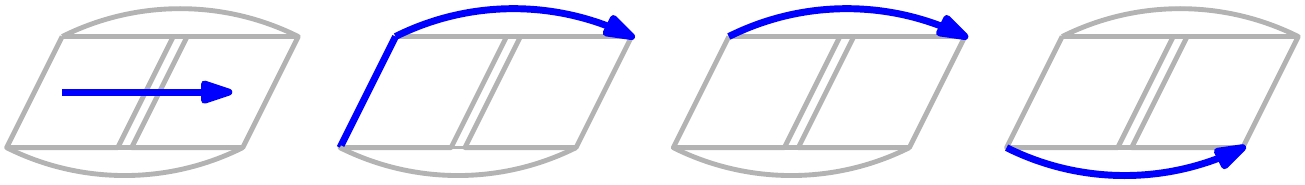}
\caption{Schematic drawing of the paths counted by $M_k$.}
\label{fig:Mk}
\end{figure}

We now show how to compute the matrices $M_k$ by induction on $k$.
The matrix $M_2$ can be easily obtained by hand (see Figure~\ref{fig:M2}).
For computing $M_k$, $k\geq 3$, consider an arbitrary group $i$ of size
$2^{k-1}+2$ in $G_k$. This group is composed of two consecutive groups
of $G_{k-1}$ that share two common vertices, say $a_j$ and $b_j$,
and two additional edges $a_ia_{i+1}$ and $b_ib_{i+1}$.
We distinguish two types of paths in group~$i$ of $G_k$:
(1) Paths that use only the edges in $G_{k-1}$.
Every such path is the concatenation of two paths,
from two consecutive groups of $G_{k-1}$,
with a common endpoint $a_j$ or $b_j$.
The number of these paths from
$a_i$ or $b_i$ to $a_{i+1}$ or $b_{i+1}$ is precisely $M_{k-1}^2$.
(2) The paths that use $a_ia_{i+1}$ or $b_ib_{i+1}$.
Since edge $a_ib_i$ is part of group~$i$, but edge $a_{i+1}b_{i+1}$ is not,
the only possible paths are $(a_i,a_{i+1})$, $(a_i,b_i,b_{i+1})$, and $(b_i,b_{i+1})$;
see Figure~\ref{fig:Mk}.
Therefore, we can compute the matrices $M_k$ iteratively as follows:
\begin{equation*}
 M_2:=\begin{pmatrix} 2 & 1 \\ 1 & 1 \end{pmatrix}, \text{ and }
 M_k:=M_{k-1}^2 +\begin{pmatrix} 1 & 0 \\ 1 & 1 \end{pmatrix}.
\end{equation*}

\begin{table}[htbp]
  \centering
  \begin{tabular}{@{} |c|c|c|c|c|c| @{}}

\hline
    $k$ & 2 & 3 & 4 & 5 & 6  \\
\hline
    $\lambda^{1/2^{k-1}}$ & 1.61803 & 1.69605 & 1.70034 & 1.70037 & 1.70037 \\
\hline

  \end{tabular}
  \caption{The asymptotic growth of the number of $x$-monotone paths in the graphs $G_k$.
    Already for $k=4$ there are $\Omega(1.7003^n)$ monotone paths.}
  \label{tab:eigenvalues}
\end{table}

Table~\ref{tab:eigenvalues} shows the values $\lambda^{1/2^{k-1}}$ for $k=2,\ldots,6$.
Note that when going from $k=5$ to $k=6$, there is no change in $\lambda^{1/2^{k-1}}$
up to 8 digits after the decimal point. The precise value of $\lambda$ for $k=5$
equals $\lambda=\frac12 (4885 + 9 \sqrt{294153})$.
\end{proof}

\subsection{Star-shaped polygons}

Use a projective transformation of the plane straight-line graph in
Figure~\ref{fig:construction-monotone} in which the point at infinity in direction $(0,1)$
moves to the center $o$ of the equilateral triangle in Figure~\ref{fig:construction-star}.
Use three copies of the resulting graph; the monotone order becomes a cyclic order
with respect to the center~$o$. The plane straight-line graph in Figure~\ref{fig:construction-star}
has $\Omega(1.7003^n)$ star shaped polygons, obtained by concatenating the images
of any three maximal monotone paths, one in each copy.
\begin{figure}[htpb]
\centering
\includegraphics[scale=1]{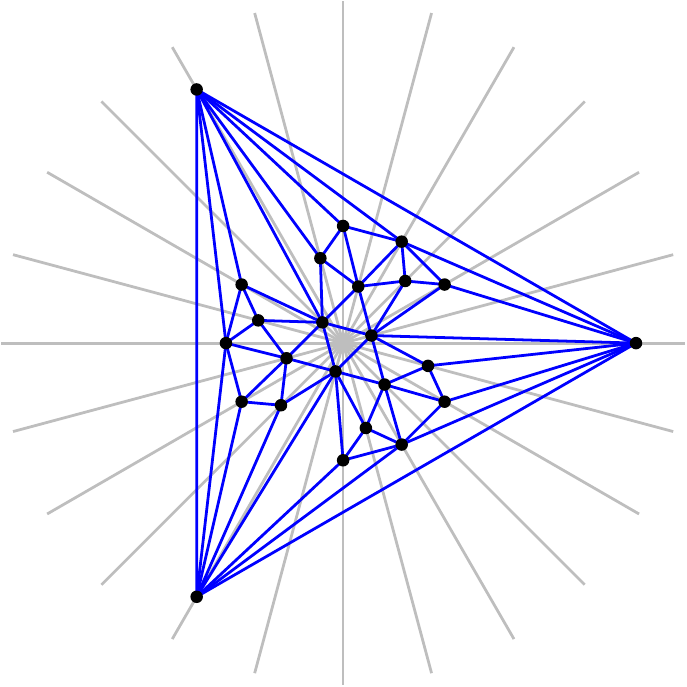}
\caption{Cyclic embedding of $3$ copies of the graph in Figure~\ref{fig:construction-monotone}.
The monotone order becomes a cyclic order.}
\label{fig:construction-star}
\end{figure}

\subsection{Directed plane graphs}

We construct $n$-vertex directed plane straight-line graphs that contain
$\Omega(1.8392^n)$ directed paths; Figure~\ref{fig:construction-paths}
shows the directed graph and Figure~\ref{fig:construction-paths-planar}
shows a planar embedding.
It is worth noting that the directed paths, however, cannot be
extended to a cycle because in a planar embedding
the start and end vertex are not in the same face. Similarly, most of the directed paths
are not monotone in any direction in a planar embedding.
\begin{figure}[htpb]
\centering
\includegraphics[scale=1.3]{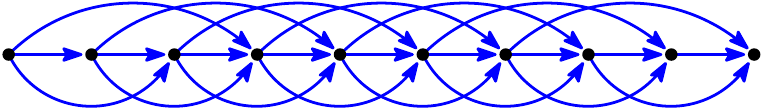}
\caption{There are $\Theta(\alpha^n)$ directed paths in this graph.
A plane embedding of the graph is depicted in Figure~\ref{fig:construction-paths-planar}.
Note: $\alpha =1.8392\ldots$ is the unique real root of the cubic equation $x^3-x^2-x-1=0$.}
\label{fig:construction-paths}
\end{figure}

\begin{figure}[htpb]
\centering
\includegraphics[scale=1.25]{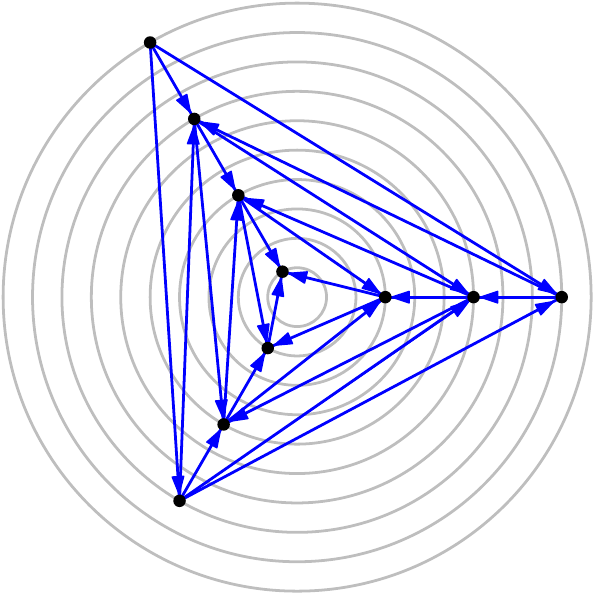}
\caption{A plane embedding of the graph in Figure~\ref{fig:construction-paths}.
The edges are directed from outer circles to inner circles.}
\label{fig:construction-paths-planar}
\end{figure}

Denoting by $T(i)$ the number of directed paths ending at vertex
$v_i$, we have $T(1)=T(2)=1$, $T(3)=2$, and a linear recurrence
relation
\begin{equation*}
T(i) = T(i-1) + T(i-2) + T(i-3), \text{ ~~for } i\geq 4.
\end{equation*}
The recurrence solves to $T(i)=\Theta(\alpha^i)$, where
$\alpha =1.8392\ldots$ is the unique real root of the cubic equation $x^3-x^2-x-1=0$.
Therefore the total number of directed paths, starting at any vertex, is $\Theta(\alpha^n)$.

\section {Upper bounds on the maximum number of subgraphs} \label{sec:upper}

\subsection{Monotone paths}

We start with $x$-monotone paths in a plane straight-line graph.
We prove the upper bound for a broader class of graphs, since some of the operations in
our argument may not preserve straight-line edges.
A \emph{plane monotone graph} is a graph embedded in the plane
such that every edge is an $x$-monotone Jordan arc.

Let $n\in \mathbb{N}$, $n\geq 3$, and let $G = (V, E)$ be a plane
monotone graph with $|V|=n$ vertices that maximizes the number of
$x$-monotone paths. We may assume that
(i) the vertices have distinct $x$-coordinates (otherwise we can perturb the vertices without
decreasing the number of $x$-monotone paths) and
(ii) the vertices lie on the $x$-axis
(by applying a homeomorphism that affects the $y$-coordinates).
We may also assume that $G$ is fully triangulated (\ie, it is an edge-maximal planar graph),
since adding $x$-monotone edges can only increase the number of $x$-monotone
paths~\cite{PT04}.

Label the vertices in $V$ as $v_1, v_2, \ldots, v_n$, sorted by their $x$-coordinates.
Orient each edge $\{v_i,v_j\} \in E$ from left to right, \ie, from $v_i$ to $v_j$ if $i < j$.
Define the \emph{length} of an edge $\overline{v_iv_j}\in E$ by $\len(\overline{v_iv_j})=|i-j|=j-i$.

Consider an edge $\overline{v_iv_j} \in E$ that is not on the boundary of
the outer face. There are two bounded faces incident to $\overline{v_iv_j}$,
and at least two other vertices $v_k$ and $v_l$ that are adjacent to both $v_i$ and $v_j$.
Suppose that $i< k,l<j$, and without loss of generality, that $k<l$
(\ie, $i< k<l<j$). The \emph{flip operation} for the edge $\overline{v_iv_j}$ replaces
$\overline{v_iv_j}$ by the edge $\overline{v_kv_l}$; note that this operation preserves planarity
but may introduce curved edges.

\begin{lemma}\label{lem:flip}
If $i < k<l< j$ as described above, then flipping $\overline{v_iv_j}$ to
  $\overline{v_kv_l}$ produces a plane monotone graph $G'$ with at least
  as many $x$-monotone paths as $G$ (see Figure~\ref {fig:flipquad-in+flipquad-out}.)
\end{lemma}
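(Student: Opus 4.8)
The plan is to set up a bijection-style injection from the $x$-monotone paths of $G$ into those of $G'$. Fix the edge $\overline{v_iv_j}$ being flipped, and let $v_k, v_l$ (with $i<k<l<j$) be the two vertices that together with $v_i,v_j$ bound the two faces incident to $\overline{v_iv_j}$; so $G'$ replaces $\overline{v_iv_j}$ by $\overline{v_kv_l}$ while all other edges are unchanged. Every $x$-monotone path of $G$ that does not use $\overline{v_iv_j}$ is already a path in $G'$, so it suffices to handle the paths that traverse the directed edge $(v_i,v_j)$. The key structural observation is that in $G$ the edges $\overline{v_iv_k}$, $\overline{v_kv_j}$, $\overline{v_iv_l}$, $\overline{v_lv_j}$ are all present (they bound the two faces), and in $G'$ the edges $\overline{v_iv_k}$, $\overline{v_kv_l}$, $\overline{v_lv_j}$ are all present (the first and last survive the flip, the middle one is newly created). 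All of these are $x$-monotone arcs oriented left-to-right, so $(v_i,v_k,v_l,v_j)$ is an $x$-monotone path in $G'$ with the same endpoints as the single edge $(v_i,v_j)$ in $G$.

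Next I would define the map on paths through $(v_i,v_j)$: given $\xi = (\ldots, v_i, v_j, \ldots)$ in $G$, replace the single step $v_i \to v_j$ by the three-step detour $v_i \to v_k \to v_l \to v_j$, leaving the prefix ending at $v_i$ and the suffix starting at $v_j$ untouched. I must check (a) the result is a path in $G'$ — its edges $\overline{v_iv_k},\overline{v_kv_l},\overline{v_lv_j}$ exist in $G'$ as noted, and the prefix/suffix edges are unaffected by the flip since they are not $\overline{v_iv_j}$; (b) it is $x$-monotone — the detour passes through vertices with $x$-coordinates strictly between those of $v_i$ and $v_j$, and internally the $x$-coordinates increase ($x(v_i)<x(v_k)<x(v_l)<x(v_j)$), while the prefix has all $x$-coordinates $<x(v_i)$ and the suffix all $>x(v_j)$, so monotonicity is preserved across the splice; (c) no vertex is repeated — the detour vertices $v_k,v_l$ lie strictly between $v_i$ and $v_j$ in $x$-order, hence cannot coincide with any vertex of the prefix or suffix, which lie strictly outside that range; and (d) the edges are pairwise non-crossing Jordan arcs, i.e. the result is a plane monotone graph — this follows because $G'$ itself is plane and the detour uses only edges of $G'$.

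Finally I would argue injectivity: the map is the identity on paths avoiding $\overline{v_iv_j}$, and on the remaining paths it is invertible by contracting any occurrence of the subpath $v_i \to v_k \to v_l \to v_j$ back to $v_i\to v_j$; one checks the two classes of images are disjoint (an image of the second type contains the edge $\overline{v_kv_l}$ traversed between $v_i$ and $v_j$, which distinguishes it). Hence $G'$ has at least as many $x$-monotone paths as $G$. The main obstacle I anticipate is part (d): one must be careful that the flipped edge $\overline{v_kv_l}$ can genuinely be drawn as an $x$-monotone Jordan arc inside the quadrilateral $v_iv_kv_jv_l$ without crossing anything — this is where the geometric/topological setup (vertices on the $x$-axis, edges as $x$-monotone arcs, and the fact that $v_k,v_l$ are the two "innermost" common neighbors bounding the faces) is used, and it is exactly the point illustrated in Figure~\ref{fig:flipquad-in+flipquad-out}. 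A secondary subtlety is confirming that $\len(\overline{v_kv_l}) < \len(\overline{v_iv_j})$, which is immediate from $i<k<l<j$ and will matter for the termination argument that presumably follows this lemma.
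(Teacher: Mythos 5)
Your proposal is correct and follows essentially the same route as the paper: handle paths avoiding $\overline{v_iv_j}$ by the identity, and inject the remaining paths by replacing the edge $\overline{v_iv_j}$ with the $x$-monotone detour $(v_i,v_k,v_l,v_j)$ through the new edge $\overline{v_kv_l}$, which can be drawn inside the quadrilateral face left by the deletion. Your additional checks (monotonicity of the splice, no repeated vertices, injectivity via contracting the detour) are all sound elaborations of what the paper leaves implicit.
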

\begin{figure}[htpb]
\centering
  \subfigure
  {
    \includegraphics{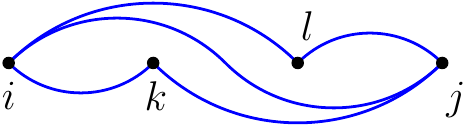}
    \label {fig:flipquad-in}
  }
   \subfigure
  {
    \includegraphics{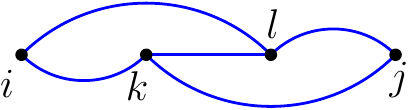}
    \label {fig:flipquad-out}
  }
\caption{The \textsf{flip} operation.}
\label{fig:flipquad-in+flipquad-out}
\end{figure}

\begin{proof}
The deletion of edge $\overline{v_iv_j}$ from $G$ creates a quadrilateral face
$(v_i,v_k,v_j,v_l)$. Since $i < k <l < j$, the new edge $\overline{v_kv_l}$
can be embedded in the interior of this face as an $x$-monotone Jordan arc.

Every $x$-monotone path in $G$ that does not contain $\overline{v_iv_j}$
is present in $G'$.
Define an injective map from the set of $x$-monotone paths that
traverse $\overline{v_iv_j}$ in $G$ into the set of $x$-monotone paths
that traverse $\overline{v_kv_l}$ in $G'$. To every $x$-monotone path $\xi$ that
traverses $\overline{v_iv_j}$ in $G$, map an $x$-monotone path $\xi'$
obtained by replacing edge $\overline{v_iv_j}$ with the path
$(v_i,v_k,v_l,v_j)$ in $G'$. It follows that $G'$ contains at least as
many $x$-monotone paths as~$G$.
\end{proof}

Note that the flip operation described in Lemma~\ref{lem:flip} decreases the
total length of the edges $\sum_{e\in E} \len(e)$. We may now
assume that among all $n$-vertex plane monotone graphs with the maximum
number of $x$-monotone paths, $G$ has minimal total edge length.
Thus Lemma~\ref{lem:flip} is inapplicable and we have the following.

\begin{lemma}\label{lem:flip2}
For every interior edge $\overline{v_iv_j}\in E$, with $i<j$,
there is a triangular face $(v_i,v_j,v_k)$ such that either $k<i<j$ or $i<j<k$.
\end{lemma}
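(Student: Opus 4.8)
The plan is to argue by contradiction, exploiting the minimality of the total edge length together with Lemma~\ref{lem:flip}. Suppose there is an interior edge $\overline{v_iv_j}$, with $i<j$, for which \emph{every} triangular face incident to it has its third vertex strictly between $v_i$ and $v_j$ in the $x$-order. Since $\overline{v_iv_j}$ is interior and $G$ is fully triangulated, it bounds exactly two triangular faces, say $(v_i,v_j,v_k)$ and $(v_i,v_j,v_l)$ with third vertices $v_k$ and $v_l$ lying on opposite sides of the arc $\overline{v_iv_j}$. By assumption both satisfy $i<k<j$ and $i<l<j$. Without loss of generality $k<l$, so we have $i<k<l<j$, which is exactly the configuration required to apply the flip operation of Lemma~\ref{lem:flip} to $\overline{v_iv_j}$.

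Next I would invoke Lemma~\ref{lem:flip}: flipping $\overline{v_iv_j}$ to $\overline{v_kv_l}$ yields a plane monotone graph $G'$ on the same $n$ vertices with at least as many $x$-monotone paths as $G$. Since $G$ was chosen to maximize the number of $x$-monotone paths, $G'$ also attains the maximum. Now I compute the effect on total edge length: the flip removes $\overline{v_iv_j}$, of length $j-i$, and adds $\overline{v_kv_l}$, of length $l-k$; because $i<k<l<j$ we have $l-k < j-i$, so $\sum_{e\in E}\len(e)$ strictly decreases. This contradicts the assumption that, among all maximum-configuration $n$-vertex plane monotone graphs, $G$ has minimal total edge length. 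Hence no such edge $\overline{v_iv_j}$ exists, and every interior edge has an incident triangular face whose apex lies outside the $x$-interval $[i,j]$, i.e.\ $k<i<j$ or $i<j<k$.

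The only subtlety — and the step I expect to need the most care — is confirming that for an interior edge of a fully triangulated plane monotone graph, both incident faces really are triangles with well-defined distinct apices $v_k,v_l$, and that the case $k=l$ cannot occur (it cannot: $v_k$ and $v_l$ lie on opposite sides of the $x$-monotone arc $\overline{v_iv_j}$, so they are distinct). One should also note that the argument only uses the flip when \emph{both} apices fall in the open interval $(i,j)$; if at least one apex already lies outside, the conclusion of the lemma holds for that edge immediately and there is nothing to prove. This dichotomy is exactly what makes the contradiction work, so I would state it explicitly before appealing to Lemma~\ref{lem:flip}.
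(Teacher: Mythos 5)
Your argument is correct and is exactly the paper's: the paper obtains Lemma~\ref{lem:flip2} by observing that the flip of Lemma~\ref{lem:flip} strictly decreases the total edge length $\sum_{e\in E}\len(e)$ while not decreasing the number of $x$-monotone paths, so it must be inapplicable once $G$ is chosen to be a length-minimal maximizer. Your write-up merely makes explicit the dichotomy on the two apices and the contradiction that the paper leaves implicit.
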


We show next that $G$ contains an $x$-monotone Hamiltonian path.

\begin{lemma} \label {lem:path}
All edges $\overline{v_iv_{i+1}}$ are present in $G$.
\end{lemma}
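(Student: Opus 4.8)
The plan is to argue by contradiction using the minimality of the total edge length together with Lemma~\ref{lem:flip2}. Suppose some ``short'' edge $\overline{v_iv_{i+1}}$ is missing from $G$. Since $G$ is fully triangulated, the vertices $v_i$ and $v_{i+1}$ are separated in the planar embedding by the other edges; in particular the face (or union of faces) between consecutive vertices must be crossed by some edge $\overline{v_av_b}$ with $a \le i < i+1 \le b$ and $(a,b)\neq(i,i+1)$, so $\len(\overline{v_av_b}) = b-a \ge 2$. First I would make this separation precise: consider the vertical line $\ell$ at $x$-coordinate strictly between $v_i$ and $v_{i+1}$; the edges of $G$ crossed by $\ell$ form, together with the outer face, a sequence of faces stacked vertically, and since $G$ is connected and triangulated, at least one edge $e$ crossed by $\ell$ is an interior edge. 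Pick such an $e = \overline{v_av_b}$ with $b - a$ minimal among edges crossing $\ell$; note $a \le i$ and $b \ge i+1$.

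Next I would apply Lemma~\ref{lem:flip2} to the interior edge $e=\overline{v_av_b}$: there is a triangular face $(v_a,v_b,v_c)$ with either $c < a$ or $c > b$. The key observation is that the third vertex $v_c$ of this triangle lies on one side of $\ell$ (since $c<a\le i$ puts $v_c$ strictly left of $\ell$, and $c>b\ge i+1$ puts it strictly right), so the triangle $(v_a,v_b,v_c)$ does not ``cover'' the crossing point of $e$ with $\ell$ on both sides — meaning the other face incident to $e$, on the side of $\ell$ containing the gap between $v_i$ and $v_{i+1}$, has its apex among the vertices strictly between $v_a$ and $v_b$. So on that side the face incident to $e$ is a triangle $(v_a,v_b,v_m)$ with $a<m<b$. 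But the region of the plane near the segment $v_iv_{i+1}$ below/above $\ell$ is bounded by $e$ on one side, and by minimality of $b-a$ among $\ell$-crossing edges, there is no $\ell$-crossing edge inside the triangle $(v_a,v_b,v_m)$; hence $m = i$ or $m = i+1$ is forced, i.e. the triangle incident to $e$ on that side is $(v_a,v_b,v_i)$ or $(v_a,v_b,v_{i+1})$. In either case, at least one of $a<i$ or $b>i+1$ holds (since $(a,b)\neq(i,i+1)$), so we have an interior edge $e$ with a triangular face whose apex is strictly between $v_a$ and $v_b$ — contradicting Lemma~\ref{lem:flip2} applied to $e$, which asserts the apex of \emph{some} incident triangle is \emph{outside} $(a,b)$, but here I would need to show \emph{both} incident triangles have apex outside, or re-examine which triangle Lemma~\ref{lem:flip2} controls.

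The main obstacle, and the point that needs the most care, is exactly this last step: Lemma~\ref{lem:flip2} guarantees only that \emph{one} of the two triangular faces incident to an interior edge has its apex outside the index interval $(a,b)$, whereas I want to derive a contradiction from the \emph{other} face. The clean way around this is to iterate: start with the shortest missing edge $\overline{v_iv_{i+1}}$, take the shortest $\ell$-crossing interior edge $e=\overline{v_av_b}$, and observe that Lemma~\ref{lem:flip2} gives a triangle $(v_a,v_b,v_c)$ with $c\notin\{a+1,\dots,b-1\}$; since $a\le i<i+1\le b$ and $\ell$ separates $v_a$ from $v_b$, the vertex $v_c$ lies on the same side of $\ell$ as one of the endpoints, so the triangle $(v_a,v_b,v_c)$ does \emph{not} touch $\ell$, meaning the actual face of $G$ adjacent to $e$ \emph{on the side meeting $\ell$} is a different triangle $(v_a,v_b,v_m)$ with $a<m<b$; now $\overline{v_av_m}$ (if $m\neq a+1$, ... ) or we can recurse on a strictly shorter $\ell$-crossing edge — either $\overline{v_av_m}$ or $\overline{v_mv_b}$ still crosses $\ell$ and is strictly shorter than $e$ unless $m=i$ and $a=i$, i.e. unless $e = \overline{v_iv_{i+1}}$ itself, contradicting that this edge is missing. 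This descent terminates and yields the contradiction, establishing that every edge $\overline{v_iv_{i+1}}$ is present.
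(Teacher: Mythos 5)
Your overall strategy---a contradiction obtained by applying Lemma~\ref{lem:flip2} to an extremal edge crossing the gap between $v_i$ and $v_{i+1}$---is the right one, but you have chosen the wrong extremum, and the patch in your final paragraph rests on a false geometric claim. You take $e=\overline{v_av_b}$ with $b-a$ \emph{minimal} among edges crossing $\ell$. Lemma~\ref{lem:flip2} then gives a face $(v_a,v_b,v_c)$ with $c<a$ or $c>b$; say $c<a$. Contrary to what you assert, this face \emph{does} meet $\ell$: its boundary contains $e$, which crosses $\ell$, so points of $\ell$ on one side of the crossing point lie in the face's interior. Since $v_c$ lies strictly left of $\ell$ and the face contains no vertex in its interior, its boundary must cross $\ell$ a second time, necessarily along $\overline{v_cv_b}$, an edge of length $b-c>b-a$. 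That is perfectly consistent with the minimality of $b-a$, so no contradiction arises. Your claim that the \emph{other} face incident to $e$ has an apex $v_m$ with $a<m<b$ is unsupported, and in fact the minimality of $b-a$ rules it out: such an apex would immediately yield a strictly shorter crossing edge $\overline{v_av_m}$ or $\overline{v_mv_b}$ (one of $v_a,v_b,v_m$ lies on each side of $\ell$). Hence the descent you describe never gets started, and the argument does not close.

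The repair is to flip the extremal choice, which is what the paper does: let $\overline{v_jv_k}$, $j<k$, be a \emph{longest} edge crossing the open segment $v_iv_{i+1}$ (such an edge exists since $G$ is a triangulation and $v_iv_{i+1}$ is not a boundary segment, and it is an interior edge since it crosses the axis strictly between two vertices; by $x$-monotonicity $j<i<i+1<k$). Lemma~\ref{lem:flip2} yields a face $(v_j,v_k,v_l)$ with, say, $j<k<l$. The segment $v_iv_{i+1}$ enters this face through $\overline{v_jv_k}$; since neither $v_i$ nor $v_{i+1}$ is a vertex of the face or lies in its interior, the segment must exit through $\overline{v_jv_l}$ or $\overline{v_kv_l}$, and the latter is impossible because $\overline{v_kv_l}$ is $x$-monotone with both endpoints strictly to the right of $v_{i+1}$. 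Hence $\overline{v_jv_l}$ crosses the segment and has length $l-j>k-j$, contradicting the choice of $\overline{v_jv_k}$. Note also that working with the open segment $v_iv_{i+1}$ rather than the whole vertical line $\ell$ sidesteps a second issue in your setup: the topmost and bottommost edges crossed by $\ell$ bound the outer face, where Lemma~\ref{lem:flip2} does not apply.
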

\begin {proof}
Suppose, to the contrary, that there are two nonadjacent vertices
$v_i$ and $v_{i+1}$. Since $G$ is a  triangulation,  $v_iv_{i+1}$ is
not a boundary segment, and so there exists an edge that crosses the line
segment $v_iv_{i+1}$. Let $\overline{v_jv_k}$, $j<k$, be a longest
edge with this property.
Since the edge $\overline{v_jv_k}$ is $x$-monotone, we have $j < i < i+1 < k$.
The edge $\overline{v_jv_k}$ is not adjacent to the outer face, since
it crosses the segment $v_iv_{i+1}$ between two vertices.
Since edge $\overline{v_jv_k}$ is interior, by Lemma~\ref{lem:flip2},
there is a triangular face $(v_j,v_k,v_l)$ such that either $l<j<k$ or $j<k<l$.
Without loss of generality, assume that $j<k<l$.
Since there is no vertex in the interior of the face
$(v_j,v_k,v_l)$, the boundary of the face has to cross the segment
$v_iv_{i+1}$ twice: that is, $\overline{v_jv_l}$ crosses the segment $v_iv_{i+1}$.
Since $j<k<l$, we have $l-j>k-j$, thus $\overline{v_jv_l}$ is longer
than $\overline{v_jv_k}$, in contradiction to the assumption that $\overline{v_jv_k}$
is a longest edge crossing $v_iv_{i+1}$.
\end {proof}

For every pair $i < j$, let $V_{ij}$ denote the set of consecutive
vertices ${v_i, v_{i+1}, \ldots, v_j}$, and let $G_{ij} = (V_{ij},
E_{ij})$ be the subgraph of $G$ induced by $V_{ij}$.
Since $G$ is planar, we know that $|E| \leq 3|V| - 6$, and
furthermore, that $|E_{ij}| \leq 3|V_{ij}| - 6$ for all subgraphs
induced by groups of 3 or more consecutive vertices.

In the remainder of the proof we will apply a sequence of \textsf{shift} operations
on $G$ (defined subsequently) that may create multiple edges and edge crossings.
Hence, we consider $G$ as an abstract multigraph. However, the operations will
maintain the invariant that $|E_{ij}| \leq 3|V_{ij}| - 6$ whenever $|V_{ij}|\geq 3$.

Let $i < j < k$ be a triple of indices such that $\overline{v_iv_j}, \overline{v_iv_k}\in E$.
The operation \textsf{shift}$(i,j,k)$ removes the edge
$\overline{v_iv_k}$ from $E$, and inserts the edge $\overline{v_jv_k}$
into $E$ (see Figure~\ref{fig:shift-in+shift-out}). Note that the new
edge may already have been present, in this case we insert a new copy
of this edge (\ie, we increment its multiplicity by one).

\begin{figure}[htpb]
\centering
  \subfigure
  {
    \includegraphics{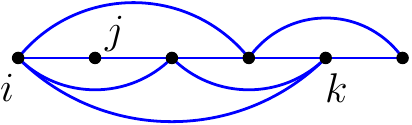}
    \label {fig:shift-in}
  }
   \subfigure
  {
    \includegraphics{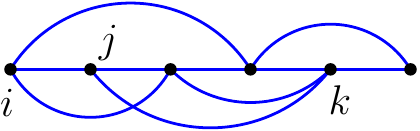}
    \label {fig:shift-out}
  }
\caption{The operation \textsf{shift}$(i,j,k)$.}
\label{fig:shift-in+shift-out}
\end{figure}

\begin {lemma}
The operation \textsf{shift}$(i,j,k)$ does not decrease the number of
$x$-monotone paths in $G$.
\end {lemma}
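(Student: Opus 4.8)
The plan is to exhibit an injection from the set of $x$-monotone paths in $G$ into the set of $x$-monotone paths in the graph $G'$ obtained by performing \textsf{shift}$(i,j,k)$, i.e.\ by deleting $\overline{v_iv_k}$ and inserting $\overline{v_jv_k}$. First I would partition the $x$-monotone paths of $G$ into those that do not traverse the deleted edge $\overline{v_iv_k}$ and those that do. Every path of the first kind survives verbatim in $G'$, since all its edges are still present. So the work is to inject the paths that use $\overline{v_iv_k}$ into paths of $G'$, and the natural candidate is to replace the single edge $\overline{v_iv_k}$ by the two-edge detour $(v_i,v_j,v_k)$, using the edge $\overline{v_iv_j}\in E$ (which is guaranteed by the hypothesis $i<j<k$ with $\overline{v_iv_j},\overline{v_iv_k}\in E$) followed by the new edge $\overline{v_jv_k}\in E'$. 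Since $i<j<k$, the detour $(v_i,v_j,v_k)$ is itself $x$-monotone, so splicing it in keeps the whole walk $x$-monotone.

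The subtlety, and the step I expect to be the main obstacle, is that the resulting walk $\xi'$ need not be a \emph{simple} path: the vertex $v_j$ might already occur on $\xi$, in which case $\xi'$ repeats $v_j$. However, here the $x$-monotonicity saves us. If $\xi$ traverses $\overline{v_iv_k}$, then along $\xi$ the vertices appear in strictly increasing order of $x$-coordinate (equivalently, index), so $v_i$ is immediately followed by $v_k$ on $\xi$, and every other vertex of $\xi$ has index either $<i$ (before $v_i$) or $>k$ (after $v_k$); in particular, no vertex of $\xi$ has index strictly between $i$ and $k$. Since $i<j<k$, the vertex $v_j$ does \emph{not} lie on $\xi$, so inserting it produces a bona fide simple $x$-monotone path $\xi'$ in $G'$. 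Thus the map $\xi\mapsto\xi'$ sends $x$-monotone paths of $G$ through $\overline{v_iv_k}$ to $x$-monotone paths of $G'$ through $\overline{v_jv_k}$.

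It remains to check injectivity and disjointness of images. The map is injective because $\xi$ is recovered from $\xi'$ by contracting the detour, i.e.\ replacing the consecutive sub-path $(v_i,v_j,v_k)$ by the single edge $\overline{v_iv_k}$; since $v_j$ has no other occurrence, this inverse is well defined on the image. Moreover, a path of $G'$ that arises this way uses the new edge $\overline{v_jv_k}$, whereas the untouched paths of $G$ that survive in $G'$ do not use $\overline{v_jv_k}$ at all (they are paths of $G$, which did not contain that edge — or at least we may line up the copies so that the surviving paths avoid the newly inserted copy), so the two families of images are disjoint. Combining, the number of $x$-monotone paths in $G'$ is at least (number not using $\overline{v_iv_k}$) $+$ (number using $\overline{v_iv_k}$) $=$ the number of $x$-monotone paths in $G$, which is the claim. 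A final remark I would add is that this argument treats $G$ as a multigraph, consistently with the running convention, and that \textsf{shift} preserves the density invariant $|E_{ij}|\le 3|V_{ij}|-6$; but that invariant is not needed for this lemma and is used elsewhere.
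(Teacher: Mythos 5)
Your proposal is correct and takes essentially the same approach as the paper, which simply reroutes every path through $\overline{v_iv_k}$ via the detour $(v_i,v_j,v_k)$; the paper states this in one sentence, while you additionally verify the points it leaves implicit (that $v_j$ cannot already lie on such a path by $x$-monotonicity, and that the map is injective with image disjoint from the surviving paths).
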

\begin {proof}
Clearly, any path that used $\overline{v_iv_k}$ can be replaced by a
path that uses $\overline{v_iv_j}$ and (the new copy of)
$\overline{v_jv_k}$.
\end {proof}

Now, we apply the following algorithm to the input graph $G$. We
process the vertices from left to right, and whenever we encounter a
vertex $v_i$ with outdegree 4 or higher, we identify the smallest
index $j$ such that $v_i$ has an edge to $v_j$ and the largest index
$k$ such that $v_i$ has an edge to $v_k$; and then apply
\textsf{shift}$(i,j,k)$. We repeat until there are no more vertices
with outdegree larger than $3$.

\begin {lemma}
The algorithm terminates and maintains the following two invariants:
\begin{enumerate} \itemsep -1pt
\item [\textup{(I1)}] All edges $\overline{v_iv_{i+1}}$ are present in $G$ with
  multiplicity one.
\item [\textup{(I2)}] $|E_{ij}| \leq 3|V_{ij}| - 6$ for all subgraphs induced
by $V_{ij}$, $i<j$.
\end{enumerate}
\end {lemma}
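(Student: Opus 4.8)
The plan is to analyze the algorithm as a potential-function argument for termination, while checking that each individual \textsf{shift} preserves both invariants. For termination, I would use the total edge length $\Phi(G) = \sum_{e\in E}\len(e)$ as a monotone potential. Each \textsf{shift}$(i,j,k)$ replaces $\overline{v_iv_k}$ (length $k-i$) by $\overline{v_jv_k}$ (length $k-j$), and since $i<j$ we have $k-j < k-i$, so $\Phi$ strictly decreases by at least $1$ at every step. Since $\Phi$ is a nonnegative integer bounded below by $0$ (indeed by $n-1$ once (I1) holds), the algorithm performs finitely many \textsf{shift} operations and terminates; at termination no vertex has outdegree exceeding $3$.

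For (I1): initially all edges $\overline{v_iv_{i+1}}$ are present with multiplicity one by Lemma~\ref{lem:path}. A \textsf{shift}$(i,j,k)$ can only remove $\overline{v_iv_k}$, and it removes it only when $v_i$ has outdegree at least $4$; I need to argue this removal never touches a copy of $\overline{v_iv_{i+1}}$, i.e.\ that $k>i+1$. This holds because $k$ is chosen as the \emph{largest} index adjacent to $v_i$, and since $v_i$ has outdegree $\geq 4$, there are at least four indices exceeding $i$ that are adjacent to $v_i$, so certainly the largest one $k$ satisfies $k \geq i+3 > i+1$. It also cannot create a second copy of $\overline{v_iv_{i+1}}$, since the inserted edge is $\overline{v_jv_k}$ with $j<k$ and (as just argued) $k > i+1 \geq j$, so if $j = i$ then $k \neq i+1$, and if $j>i$ the edge is not of the form $\overline{v_tv_{t+1}}$ unless $k=j+1$, in which case it is $\overline{v_jv_{j+1}}$ — and here I would invoke the fact that for this edge to appear as a \emph{new} copy, it would already be present with multiplicity one; but a more careful look at the choice of $j$ (smallest index adjacent to $v_i$, so $j$ is ``far'' from $k$ when the outdegree is large) rules this out, which is the one place I expect to have to be careful.

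For (I2): this is maintained because \textsf{shift}$(i,j,k)$ moves an edge ``inward'' with respect to the linear order. For any index pair $p<q$, the induced edge count $|E_{pq}|$ can only change if exactly one of $\{\overline{v_iv_k}, \overline{v_jv_k}\}$ lies inside $V_{pq}$. Since $i<j$, whenever $\overline{v_jv_k}$ is induced by $V_{pq}$ (i.e.\ $p\le j$ and $q\ge k$) we also have $p \le i$, so $\overline{v_iv_k}$ is induced as well; hence the net change in $|E_{pq}|$ is never positive. Combined with the initial bound $|E_{ij}|\le 3|V_{ij}|-6$ (valid since $G$ starts as a planar triangulation and induced subgraphs of planar graphs are planar), (I2) is preserved throughout.

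The main obstacle is the multiplicity bookkeeping in (I1): one must verify not only that $\overline{v_iv_{i+1}}$ is never deleted but also that no second copy of it is ever created by an insertion $\overline{v_jv_k}$ — this requires using both the ``smallest $j$'' and ``largest $k$'' choices together with the outdegree-$\geq 4$ condition to force $k$ and $j$ apart, so that $\overline{v_jv_k}$ is never a ``short'' edge. Everything else (termination via $\Phi$, preservation of (I2)) is essentially mechanical.
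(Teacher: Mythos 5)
Your termination argument (the total edge length $\sum_{e}\len(e)$ strictly decreases with each \textsf{shift}) is fine, and is the natural way to supply what the paper leaves implicit. The serious problem is in your treatment of (I2), which you dismiss as ``essentially mechanical'': your containment claim is backwards. You assert that whenever $\overline{v_jv_k}$ is induced by $V_{pq}$ (i.e.\ $p\le j$ and $q\ge k$) we also have $p\le i$. That is false precisely when $i<p\le j$: for such intervals, $V_{pq}$ contains $v_j$ and $v_k$ but not $v_i$, so the inserted edge $\overline{v_jv_k}$ is counted while the removed edge $\overline{v_iv_k}$ is not, and $|E_{pq}|$ goes \emph{up} by one. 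Moving an edge ``inward'' makes it belong to \emph{more} induced subgraphs, not fewer. This is exactly the hard case, and it is where the paper does real work: using (I1), the leftmost out-neighbor of $v_i$ is $v_{i+1}$, so $j=i+1$ and any violating subgraph must be $G_{jk'}$ with $k'\ge k$; since $v_k$ was the rightmost out-neighbor of $v_i$, all (at least four) outgoing edges of $v_i$ land in $V_{jk'}$, so $G_{ik'}$ would already have had more than $3|V_{ik'}|-6$ edges \emph{before} the shift, contradicting the choice of the first violating operation. Without an argument of this kind your proof of (I2) does not go through.

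On (I1), you correctly isolate the danger that the inserted edge $\overline{v_jv_k}$ could be a consecutive edge (the case $k=j+1$), but you do not close it, and your fallback claim that outdegree $\ge 4$ yields four \emph{distinct} out-neighbors is not automatic in a multigraph. The gap is closable: by (I1) the edge $\overline{v_iv_{i+1}}$ has multiplicity one, and by (I2) applied to the three-vertex set $V_{i,i+2}$ (which already contains the two consecutive edges) there is at most one copy of $\overline{v_iv_{i+2}}$; hence outdegree $\ge 4$ forces some out-neighbor with index at least $i+3$, so $k\ge j+2$ and the inserted edge is never of the form $\overline{v_tv_{t+1}}$. Note that this repair uses (I2) inside the proof of (I1), so the two invariants must be maintained jointly by induction on the number of operations, not separately as your write-up suggests.
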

\begin {proof}
Initially, invariant (I1) holds by Lemma~\ref {lem:path}, and (I2) by
planarity. Edges between consecutive vertices are neither removed nor added
in the course of the algorithm, consequently (I1) is maintained.
To show that (I2) is maintained, suppose the contrary,
that there is an operation that increases the
number of edges of an induced subgraph $G'$
above the threshold. Let \textsf{shift}$(i,j,k)$ be the first
such operation. Since the only new edge is $\overline{v_jv_k}$,
the subgraph $G'$ must contain both $v_j$ and $v_k$;
and it cannot contain $v_i$ since the only edge removed
is $\overline{v_iv_k}$. Recall that $v_j$ was the leftmost
vertex that $v_i$ is adjacent to; and by invariant~(I1),
we know $j=i+1$. Therefore, $G'=G_{jk'}$ for some $k'\geq k$,
and we have $|E_{jk'}| \geq 3|V_{jk'}| - 5$ after the shift.
Since $v_k$ was the rightmost vertex adjacent to $v_i$ before
the shift, all outgoing edges of $v_i$ went to vertices in $V_{jk'}$.
The outdegree of $v_i$ was at least $4$ before the shift,
hence $G_{ik'}$ had at least $3(|V_{ik'}|-1)-5+4 = 3|V_{ik'}| - 4 > 3|V_{ik'}| - 6$
edges, which is a contradiction.
\end {proof}

Now, after executing the algorithm, we are left with a multigraph
where the outdegree of every vertex is at most 3, and no subgraph
induced by $|V_{i,j}|\geq 3$ consecutive vertices has more than
$3|V_{ij}|-6$ edges. This, combined with invariant~(I1), implies that
the multiplicity of any edge $\overline{v_iv_{i+2}}$ is at most
one. Thus, for every vertex $v_i$, the (at most) three outgoing edges go to
vertices at distance at least $1$, $2$, and $3$, respectively, from $v_i$.
Denoting by $T(i)$ the number of $x$-monotone paths that start at $v_{n-i+1}$,
we arrive at the recurrence
\begin{equation*}
T(i) \le T(i-1) + T(i-2) + T(i-3), \text{ ~~for } i \geq 4,
\end{equation*}
with initial values $T(1)=T(2)=1$ and $T(3)=2$.
The recurrence solves to $T(n)=O(\alpha^n)$ where $\alpha =1.8392\ldots$
is the unique real root of the cubic equation $x^3-x^2-x-1=0$. Therefore,
every plane monotone graph on $n$ vertices admits $O(\alpha^n)$ $x$-monotone paths.
In particular, every plane straight-line graph on $n$ vertices admits
$O(\alpha^n)$ $x$-monotone paths.

Since the edges of an $n$-vertex planar straight-line graph have at
most $3n-6=O(n)$ distinct directions, the number of monotone paths
(over all directions) is bounded from above by $O(n\alpha^n)$.
We summarize our results as follows.

\begin{theorem}\label{thm:mon}
For every $n\in \mathbb{N}$, every triangulation on $n$ points contains
$O(\alpha^n)$ $x$-monotone paths and $O(n\alpha^n)$ monotone paths, where
$\alpha =1.8392\ldots$ is the unique real root of $x^3-x^2 -x -1=0$.
\end{theorem}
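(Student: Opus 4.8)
\textbf{Proof plan for Theorem~\ref{thm:mon}.}
The plan is to collect the work already done in this section and package it into the stated bound, then add the one extra observation needed to pass from a single direction to all directions. For the $x$-monotone count, I would first invoke the reductions established above: given a triangulation $T$ on $n$ points, I may assume the vertices have distinct $x$-coordinates, lie on the $x$-axis, and that $T$ is edge-maximal (adding $x$-monotone edges never decreases the count, by~\cite{PT04}); among all maximizers I take one of minimum total edge length. Lemma~\ref{lem:flip} justifies the minimality assumption, Lemma~\ref{lem:flip2} gives the structural consequence that every interior edge lies in a triangle extending it to the left or right, and Lemma~\ref{lem:path} shows all edges $\overline{v_iv_{i+1}}$ are present, so $T$ has an $x$-monotone Hamiltonian path. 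Then I run the \textsf{shift} algorithm, whose two lemmas guarantee (a) the $x$-monotone path count does not decrease under any \textsf{shift}, and (b) invariants (I1) and (I2) are maintained and the algorithm terminates with every outdegree at most $3$.

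Next I would extract the recurrence. After the algorithm, invariant~(I1) plus the fact that no $V_{ij}$ with $|V_{ij}|\ge 3$ exceeds $3|V_{ij}|-6$ edges forces each $\overline{v_iv_{i+2}}$ to have multiplicity exactly one, so the (at most three) outgoing edges of any $v_i$ reach vertices at forward distances at least $1$, $2$, $3$. Writing $T(i)$ for the number of $x$-monotone paths starting at $v_{n-i+1}$, this yields $T(i)\le T(i-1)+T(i-2)+T(i-3)$ for $i\ge 4$ with $T(1)=T(2)=1$, $T(3)=2$; standard linear-recurrence analysis (the characteristic polynomial being $x^3-x^2-x-1$) gives $T(n)=O(\alpha^n)$. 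Since every $x$-monotone path of the original triangulation survives all the operations, the original triangulation has $O(\alpha^n)$ $x$-monotone paths, and replacing the $x$-direction by an arbitrary fixed direction $\mathbf{u}$ gives $O(\alpha^n)$ $\mathbf{u}$-monotone paths for each $\mathbf{u}$.

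Finally, for the ``all directions'' bound I would observe that a monotone path is determined by a direction class, and two directions $\mathbf{u},\mathbf{u}'$ induce the same orientation of $T$ (hence the same set of monotone paths) unless some edge of $T$ is orthogonal to a direction strictly between them; since $T$ has at most $3n-6$ edges, there are at most $O(n)$ combinatorially distinct direction classes, so summing the per-direction bound over these classes gives $O(n\alpha^n)$ monotone paths in total. Assembling these pieces proves both halves of the theorem.

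I do not expect a genuine obstacle here, since every substantive step has already been proved as a lemma earlier in the section; the only mild subtlety is the last paragraph's counting of direction classes, where one must be careful that a single monotone path might be monotone in several direction classes (which only helps, as it means we are overcounting) and that sweeping the direction of $\mathbf{u}$ through $360^\circ$ changes the induced orientation only when crossing a direction orthogonal to an existing edge — bounding the number of such critical directions by the edge count is what yields the factor $O(n)$. Everything else is bookkeeping: tracking that the chain of reductions (perturb, project, triangulate, minimize length, shift) is monotone in the count so the bound transfers back to the arbitrary input triangulation.
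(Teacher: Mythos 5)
Your proposal follows the paper's proof essentially verbatim: the same chain of reductions (perturbation, triangulation, minimum total edge length, the \textsf{flip} and \textsf{shift} lemmas, the outdegree-$3$ conclusion and the recurrence $T(i)\le T(i-1)+T(i-2)+T(i-3)$), and the same final step of bounding the number of combinatorially distinct direction classes by the $O(n)$ edge directions to get the extra factor of $n$. It is correct and matches the paper's argument.
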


\subsection{Star-shaped polygons}

Given a plane straight-line graph $G$ on $n$ vertices,
the lines passing through the $O(n)$ edges of $G$
induce a line arrangement with $O(n^2)$ faces. Choose a face $f$ of the
arrangement, and a vertex $v$ of $G$. We show that $G$ contains
$O(\alpha^n)$ star-shaped polygons incident to vertex $v$ and with a
star center lying in $f$. Indeed, pick an arbitrary point $o\in f$.
Each edge of $G$ is oriented either clockwise or counterclockwise with
respect to $o$ (with the same orientation for any $o \in f$). Order the vertices
of $G$ by a rotational sweep around $o$ starting from the ray
$\overrightarrow{ov}$. Let $G_{f,v}$ be the graph obtained from $G$ by
deleting all edges that cross the ray $\overrightarrow{ov}$.
We can repeat our previous argument for monotone paths for $G_{f,v}$,
replacing the $x$-monotone order by the rotational sweep order about $o$,
and conclude that $G$ admits $O(\alpha^n)$ star-shaped polygons
incident to vertex $v$ and with the star center in $f$.
Summing over the $O(n)$ choices for $v$ and the $O(n^2)$ choices for $f$,
we deduce that $G$ admits $O(n^3 \alpha^n)$ star-shaped polygons.

\subsection{Directed simple paths}

Let $G=(V,E)$ be a directed planar
graph. Denote by $\degree^+(v)$ the outdegree of vertex $v\in V$;
let $V^+=\{v_1,\ldots, v_\ell\}$ be the set of vertices with outdegree
at least 1, where $1\leq \ell\leq n$. We show that for every $v \in V^+$,
there are $O(3^n)$ maximal (with respect to containment)
directed simple paths in $G$ starting from $v$.
Each maximal directed simple path can be encoded in an
$\ell$-dimensional vector that contains the outgoing edge of each
vertex $v\in V^+$ in the path (and an arbitrary outgoing edge if $v\in
V^+$ is not part of the path). The number of such vectors is
\begin{equation*}
\prod_{i=1}^{\ell} \degree^+(v_i) \leq \left(\frac{1}{\ell}
\sum_{i=1}^{\ell} \degree^+(v_i)\right)^\ell
< \left( \frac{3n}{\ell} \right)^\ell \leq 3^n,
\end{equation*}
where we have used the geometric-arithmetic mean inequality, and the fact that by Euler's
formula $\sum_{i=1}^\ell \degree^+(v_i)\leq 3n-6<3n$. We then have maximized
the function $x\rightarrow (3n/x)^x$ over the interval $1\leq x\leq n$.
Since there are $O(n)$ choices for the starting vertex $v \in V^+$,
and a maximal simple path contains $O(n)$ nonmaximal paths starting
from the same vertex, the total number of simple paths is $O(n^2 \, 3^n)$.

\section{Bounds on the minimum number of subgraphs}
\label{sec:min}

In this section, we explore the \emph{minimum} number of geometric
subgraphs of a certain kind that a triangulation on $n$ points in the plane can have.
We start with some easier results concerning convex polygons, star-shaped polygons,
and directed paths (in the next three subsections). The most difficult result
(concerning monotone paths) is deferred to the last subsection.

\subsection{Convex polygons}

Every $n$-vertex triangulation has at least $n-2$ triangular faces,
hence $n-2$ is a trivial lower bound for the number of convex polygons.
Hurtado, Noy and Urrutia~\cite{HNU99} proved that every triangulation contains
at least $\lceil n/2\rceil $ pairs of triangles whose union is convex, and
this bound is the best possible. Consequently, every triangulation contains
at least $3n/2 -O(1)$ convex polygons, each bounding one or two faces.
The $n$-vertex triangulations in Figure~\ref{fig:construction-min}\,(left)
contains $4n- O(1)$ convex polygons, and so this bound is the best possible apart
from constant factors. The triangulation consists of the join of two paths, $P_2 *P_{n-2}$,
where the path $P_{n-2}$ is realized as a monotone zig-zag path. Every convex
polygon is either a triangle or the union of two adjacent triangles
that share a flippable edge~\cite{HNU99}.
\begin{figure}[htpb]
\centering
\includegraphics[scale=0.75]{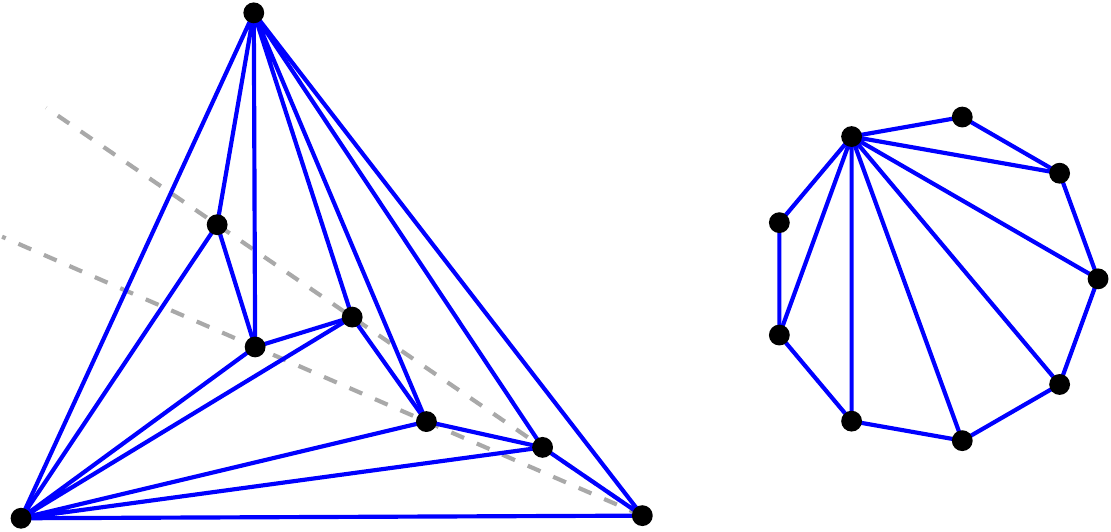}
\caption{There are $\Theta(n)$ convex polygons and $x$-monotone
  paths in the triangulation on the left; it contains $\Theta(n^4)$
  star-shaped polygons and monotone paths. There are $\Theta(n^2)$
  star-shaped polygons and $\Theta(n^4)$ monotone paths in the
  triangulation on the right.}
\label{fig:construction-min}
\end{figure}

\subsection{Star-shaped polygons}

Every convex polygon is star-shaped, and so the $3n/2 -O(1)$ lower bound
on the number of convex polygons in a triangulation on $n$ points
(from the previous paragraph) also holds for star-shaped polygons.
The following averaging argument yields an improved lower bound.

Consider a triangulation $T=(V,E)$ on $n$ points, $k$ of which are in
the interior of the convex hull of $V$. Then $T$ has $n+k-2$ bounded
(triangular) faces, $2n+k-3$ edges, $n+2k-3$ of which are interior edges.
Consequently, the average vertex degree in $T$ is $4+2(k-3)/n$.
For each vertex $v\in V$, every sequence of consecutive faces incident to $v$
forms a star-shaped polygon. The number of such sequences is
$2{\deg(v)\choose 2}+1$ for interior vertices and
${\deg(v)\choose 2}$ for boundary vertices. However,
summation over all $v\in V$ counts every triangular face three times,
and every quadrilateral formed by a pair of adjacent faces
twice. Consequently, the number of star-shaped polygons formed
by the union of faces with a common vertex is at least
\begin{align}
\sum_{v\in V} {\deg(v) \choose 2} & +k - 2(n+k-2) - (n+2k-3) \nonumber\\\
&\geq n {{4+2(k-3)/n}\choose 2} -3n-3k+7\nonumber\\
&\geq \frac{n}{2}\cdot \left(4+\frac{2(k-3)}{n}\right)\left(3+\frac{2(k-3)}{n}\right)-3n-3k+7\nonumber\\
&\geq 6n+7(k-3)+\frac{2(k-3)^2}{n} -3n-3k+7\nonumber\\
&= 3n+4k-14+\frac{2(k-3)^2}{n} \label{eq:degree}
\end{align}
where we have used Jensen's inequality in the first step.
Since $k \geq 0$, inequality~\eqref{eq:degree} yields at least $3n-O(1)$ star-shaped polygons.

Our best lower bound construction is a \emph{fan triangulation}, with a vertex of degree $n-1$,
shown in Figure~\ref{fig:construction-min}\,(right); it admits ${{n-1}\choose 2}$
star-shaped polygons.

\subsection{Directed paths}

In a directed triangulation, every edge is a directed path of length 1, and the boundary of
every triangular face contains at least one path of length two (that can be uniquely assigned to it).
Since every triangulation on $n$ points has at least $2n-3$ edges and at least $n-2$ triangular faces,
it follows that every directed triangulation has at least $3n-5$ directed paths of length 1 or 2.
This bound is the best possible.
Indeed, the directed triangulation shown in Figure~\ref{fig:construction-directed-min}
has $3n -5$ directed paths: the $n-1$ fan edges are directed upward and
the remaining edges on the convex hull are directed clockwise or counterclockwise,
in an alternating fashion. There are $2n-3$ paths of length $1$, and $n-2$ paths
of length $2$, each lying on the boundary of a triangular face, and there are no paths
of length $3$ or higher.
\begin{figure}[htpb]
\centering
\includegraphics[scale=1.05]{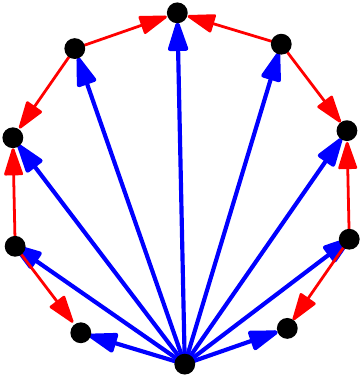}
\caption{There are $\Theta(n)$ directed paths in this directed planar triangulation.}
\label{fig:construction-directed-min}
\end{figure}

\subsection{Monotone paths}

\subsubsection{Lower bound}
We first argue that every triangulation contains $\Omega(n^2)$ monotone
paths, since there is a monotone path connecting any pair of vertices.
Indeed, this is a corollary of the following lemma applied to triangulations:
\begin{lemma} \textup{\cite{DRT12}[Lemma~1]}
Let $v$ be a vertex in a plane graph $G=(V,E)$ where every bounded face with $k\geq 3$
vertices is a convex $k$-gon, and the outer face is the exterior of the convex hull of $V$.
Then $G$ contains a spanning tree rooted at $v$ such that all paths starting at $v$ are monotone.
\end{lemma}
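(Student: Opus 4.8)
The plan is to construct the claimed spanning tree by a greedy sweep that grows a monotone-path tree from $v$, adding one vertex at a time. First I would fix a generic direction and, at each stage, let $S$ be the set of vertices already reached by monotone paths from $v$; the key claim is that as long as $S \neq V$, there is a vertex $w \in V \setminus S$ and an edge $uw \in E$ with $u \in S$ such that the path from $v$ to $u$ in the current tree, extended by $uw$, is still monotone. To see this, pick $w \in V \setminus S$ minimizing some linear functional $\langle \cdot, \mathbf{u}\rangle$ among the unreached vertices; among the neighbors of $w$, at least one lies in $S$ (since $G$ is connected), and I would argue that one can choose the direction $\mathbf{u}$ — or choose $w$ — so that the edge into $w$ from $S$ has positive scalar product with $\mathbf{u}$ and the path in the tree to that neighbor is monotone in $\mathbf{u}$ as well. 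Rather than fixing a single global direction, the cleaner route is to allow each root-to-leaf path its own direction, and phrase monotonicity locally.

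Concretely, I would instead prove the stronger local statement: orient every edge of $G$ so that, walking around each vertex, the incoming and outgoing edges each form a contiguous arc (this is the standard "no local minimum/maximum except at prescribed vertices" setup), using convexity of the bounded faces to rule out the bad configuration. Because every bounded face is a convex polygon and the outer face is the complement of the convex hull, one can choose an acyclic orientation in which $v$ is the unique source and every other vertex has at least one incoming edge; then following incoming edges backward from any vertex reaches $v$, and the resulting in-tree (pick one incoming edge per vertex $\neq v$) has the property that each root-to-$w$ path is monotone in the direction certifying the orientation at the edges along it. The convexity hypothesis is exactly what guarantees that along such a path the edges' directions stay in a half-plane, so the path is monotone in a common direction.

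The main obstacle is producing the orientation with a unique source at $v$ and no other source: an arbitrary acyclic orientation could create spurious sources. I would handle this by taking a shelling-type order of the faces or, more simply, by the sweep above — process vertices in increasing order of a generic linear functional with $v$ first, orient each edge from the earlier to the later endpoint; then $v$ is the unique source, every other vertex has an incoming edge, and picking for each $w\neq v$ the incoming edge from its latest-processed in-neighbor yields a tree. Monotonicity of the $v$-to-$w$ path in this tree with respect to the sweep direction is immediate because every edge goes from earlier to later in that direction. Thus the sweep already gives a spanning tree of monotone paths, and the convexity/outer-face hypotheses are only needed to guarantee such a sweep can be realized consistently with the embedding — which for a fixed generic direction is automatic since "earlier in direction $\mathbf u$" is a total order on the vertices. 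I would then remark that this immediately gives $\binom{n}{2} = \Omega(n^2)$ monotone paths in any triangulation by choosing $v$ arbitrarily and reading off the $n-1$ tree paths, then varying $v$ and discarding the over-count by a constant factor.
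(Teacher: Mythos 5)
This lemma is imported from~\cite{DRT12} and not proved in the paper itself, so there is no in-paper argument to compare against; judged on its own, your proposal has a genuine gap. The construction you ultimately commit to --- sweep the vertices in increasing order of a generic linear functional ``with $v$ first,'' orient edges from earlier to later, and take one incoming edge per vertex --- only makes sense when $v$ is the $\mathbf{u}$-minimal vertex for the chosen direction $\mathbf{u}$. But the lemma is stated for an \emph{arbitrary} vertex $v$, and if $v$ is interior to the convex hull there is no direction in which $v$ is extremal: for every $\mathbf{u}$ the edges at an interior vertex span an angular range of $2\pi$ (each face angle being less than $\pi$ by convexity), so $v$ always has a $\mathbf{u}$-smaller neighbor and can never be the unique source of your orientation. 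The phrase ``with $v$ first'' is therefore doing all the work and is false in general. What your sweep actually proves is the weaker statement that for each direction $\mathbf{u}$ the $\mathbf{u}$-minimal hull vertex admits a spanning tree of $\mathbf{u}$-monotone paths; that does not yield the lemma, nor the $\Omega(n^2)$ count the paper extracts from it (which needs $n-1$ monotone paths emanating from \emph{every} vertex, not just from hull vertices).

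You correctly sensed the real difficulty early on --- ``the cleaner route is to allow each root-to-leaf path its own direction'' --- but neither of your first two sketches carries this out: the greedy-augmentation step never verifies that the extended path is monotone in \emph{some} direction (the directions certifying monotonicity of the partial tree paths may be incompatible with the new edge), and the ``contiguous in/out arcs'' orientation is asserted rather than constructed. The whole content of the lemma is precisely how to stitch together paths that are monotone in \emph{different} directions into one spanning tree rooted at an arbitrary, possibly interior, $v$, using the convexity of the bounded faces at each extension step; that mechanism is missing. Your closing remark that the convexity and outer-face hypotheses are ``only needed to guarantee such a sweep can be realized'' and are ``automatic'' further signals the gap: those hypotheses are exactly what rule out spurious local minima (sources) of the orientation, and they are essential, not decorative.
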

It is also worth noting that the monotone path connecting a pair of
vertices, $u$ and $v$, is not necessarily monotone in the direction
$\overrightarrow{uv}$. Also, two vertices are not always
connected by an $x$-monotone path: a trivial lower bound for the number
$x$-monotone paths is $\Omega(n)$, since every nonvertical edge is $x$-monotone.
The triangulation $P_2 *P_{n-2}$ in Figure~\ref{fig:construction-min}\,(left)
is embedded such that the path $P_{n-2}$ is $x$-monotone and lies to
the right of $P_2$. With this embedding, it contains $\Theta(n^2)$
$x$-monotone paths: every $x$-monotone path consists of a sequence of
consecutive vertices of $P_{n-2}$, and 0, 1, or 2 vertices of $P_2$. However,
both triangulations in Figure~\ref{fig:construction-min} admit $\Theta(n^4)$
monotone paths (over all directions).

Triangulations with a polynomial number of monotone paths are also
provided by known constructions in which all monotone paths are
``short''. Dumitrescu, Rote, and T\'oth~\cite{DRT12} constructed
triangulations with maximum degree $O(\log n/\log \log n)$ such that
every monotone path has $O(\log n/\log n\log n)$ edges. Moreover,
there exist triangulations with bounded vertex-degree in which every monotone
path has $O(\log n)$ edges. These constructions contain polynomially
many, but $\omega(n^4)$, monotone paths.

\subsubsection{Upper bound}
We construct a triangulation $T$ of $n$ points containing
$O(n^{2\log 3} \log^2 n) = O(n^{3.17})$ monotone
paths\footnote{Throughout this paper, all logarithms are in base $2$.}.
A~similar construction was introduced in~\cite{DRT12} as a stacked
polytope in $\mathbb{R}^3$, where every monotone path on its 1-skeleton
has $O(\log n)$ edges.
\begin{theorem}\label{thm:monpath}
For every $n\in \mathbb{N}$, there is an $n$-vertex triangulation that contains
$O(n^{2\log 3}\log^2 n)=O(n^{3.17})$ monotone paths.
\end{theorem}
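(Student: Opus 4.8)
The plan is to take for $T$ a recursively stacked (Apollonian) triangulation modeled on the stacked polytope of~\cite{DRT12}, and to bound the number of monotone paths it contains by combining two ingredients: that every monotone path in $T$ is short (only $O(\log n)$ edges), and that a short monotone path is determined by only a constant amount of information at each level of the recursion that defines $T$.

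\noindent\emph{Construction.} Start with a triangle $\Delta_0$, whose three vertices form the triangulation $T_0$. Obtain $T_{i+1}$ from $T_i$ by inserting into every triangular face $abc$ of $T_i$ a new vertex $o$ in its interior, joined to $a,b,c$, thereby replacing the face $abc$ by the three faces $oab$, $obc$, $oca$. The position of each inserted point $o$ inside its face is chosen carefully, so that $T$ is (the projection of) a stacked polytope as in~\cite{DRT12} and the three sub-triangles are ``balanced'' with respect to directions; this placement is what will force monotone paths to be short. After $k$ rounds, $T:=T_k$ has $n=3+\sum_{i=1}^{k}3^{\,i-1}=\Theta(3^k)$ vertices, so the depth is $k=\log_3 n+O(1)=\Theta(\log n)$. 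We call the round in which a vertex is inserted its \emph{level} (the three initial vertices have level $0$); the triangular regions that appear in the course of the construction form a laminar family, organized in a ternary \emph{recursion tree} $\script{R}$ of depth $k$ with $\Theta(n)$ nodes. Observe that each vertex $v$ of level $i\ge 1$ has exactly three neighbours of level smaller than $i$ — the three corners of the region into which $v$ was inserted — and every other neighbour of $v$ has level greater than $i$.

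\noindent\emph{Step 1: monotone paths are short.} The first and, I expect, hardest step is to prove that every monotone path in $T$ has $O(\log n)$ edges. This is the one place where the precise coordinates of the construction are used, and it is essentially the property already established for the $3$-dimensional version in~\cite{DRT12}, which should transfer to $T$ after a projective change of coordinates. The mechanism is that the chosen placement spreads the three ``downward'' edges at each inserted point $o$ so that no direction has a positive scalar product with two of them; consequently, along any $\mathbf{u}$-monotone path no interior vertex can be a strict local minimum of the level sequence relative to its two path-neighbours, so the level sequence of the path changes direction only $O(1)$ times and the path can meet each of the $k+1$ levels only a bounded number of times, giving the bound of $O(k)=O(\log n)$ edges.

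\noindent\emph{Step 2: counting.} Since the edge vectors of a monotone path lie in a common open half-plane, such a path has a well-defined orientation, and we read off its vertices in that order. Walk the path and record, at each vertex, which of the boundedly many ``ports'' of the current region the path uses to continue: by the neighbour structure noted above, the successor of the current vertex is either one of the three lower-level corners of the current region or lies inside one of the three sub-regions created by the current vertex, so a constant-size record at each recursion level is enough to reconstruct the whole path, including its starting vertex. Organizing this bookkeeping along $\script{R}$ yields a recurrence bounding the number of monotone paths confined to a region at depth $d$ (from the leaves) in terms of the three depth-$(d-1)$ sub-region counts together with a bounded number of ``boundary patterns''; combined with the $O(\log n)$ length bound from Step~1 to keep the endpoint bookkeeping finite, the recurrence carries over $k=\log_3 n+O(1)$ levels and solves to $O(n^{2\log 3}\log^2 n)$. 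Here the exponent $2\log 3$ reflects the constant number of port choices resolved per recursion level — so that the count grows by a constant factor $3^{2\log 3}$ per level over $\log_3 n$ levels — and the two logarithmic factors come from the choice of the highest-level region the path enters and from an inner choice within it. The delicate part is carrying out this last accounting tightly enough to obtain exactly the exponent $2\log 3$; everything else is routine. Finally, no separate factor for the direction is needed, since each monotone path is counted once by the orientation-based encoding above even though it is monotone in a whole arc of directions.
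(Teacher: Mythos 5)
Your construction is not the one the paper uses, and the two steps on which your count rests are both left unproved at exactly the points where the difficulty lies. The paper's triangulation is a highly \emph{unbalanced} stacked triangulation: an apex $o$ of degree $n-1$ joined to $\ell=\log_2 n$ layers of $2^i$ points on rapidly shrinking circles, so that only the faces incident to $o$ are ever re-stacked. The exponent $2\log 3=2\log_2 3$ is then derived, not posited: the number of upward (i.e.\ $(0,1)$-monotone) paths from layer $i$ to the hull satisfies $\tau_i=2\sum_{j<i}\tau_j+2=2\cdot 3^i$, giving $O(3^{\log_2 n})=O(n^{\log_2 3})$ maximal upward paths, and a separate structural lemma (with two claims pinning down all but two pieces of the path) shows that every maximal monotone path splits into at most four upward/downward pieces, of which only two are freely chosen --- hence the square. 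Your balanced Apollonian network has different parameters altogether (depth $\log_3 n$, maximum degree $\Theta(n^{\log_3 2})$, no universal apex), and nothing in your argument derives the exponent for it: the assertion that the count ``grows by a constant factor $3^{2\log 3}$ per level'' is reverse-engineered from the desired answer rather than obtained from a recurrence you have actually written down and solved.

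The gap is not merely one of presentation. First, the paper itself points out that the known constructions in which every monotone path has $O(\log n)$ edges --- precisely the stacked-polytope-type constructions of~\cite{DRT12} you are modelling yours on --- contain $\omega(n^4)$ monotone paths, i.e.\ \emph{more} than the $O(n^{3.17})$ you need; so ``short paths plus recursive structure'' cannot by itself yield the theorem, and it is not at all clear that your $T$ satisfies the claimed bound. Second, even granting Step~1, Step~2 does not close the count: a vertex of level $i$ has $\Theta(2^{k-i})$ higher-level neighbours, and the natural region-by-region recurrence for paths crossing a region through its centre multiplies the counts from two subregions; without an analogue of the paper's Claims~1 and~2, which force all but a bounded amount of a maximal monotone path to be uniquely determined, such a recurrence explodes well past $n^{3.17}$. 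The paper's proof spends most of its length on exactly this rigidity statement (a monotone path changes vertical direction at most three times, and outside two designated pieces it is determined by a single vertex). You would need to formulate and prove the corresponding statement for your balanced construction, and there is no evidence it holds with constants small enough to recover the exponent $2\log_2 3$.
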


\paragraph{Construction.}
For every integer $\ell\geq 0$, we define a triangulation $T$ on $n=2^\ell+2$ vertices.
Refer to Figure~\ref{fig:mp}. The outer face is a right triangle $\Delta{oab}$,
where $o$ is the origin and $\angle boa =\frac{\pi}{2}$. The interior vertices are
arranged on $\ell$ circles, $C_0,C_1,\ldots, C_{\ell-1}$, centered at the
origin, with the radii of the circles rapidly approaching $0$.
We place $2^i$ points on $C_i$, in an equiangular fashion, as described below,
and so the number of interior points is $\sum_{i=0}^{\ell-1} 2^i= 2^\ell-1$.
\begin{figure}[htbp]
\centering
\includegraphics[scale=0.9]{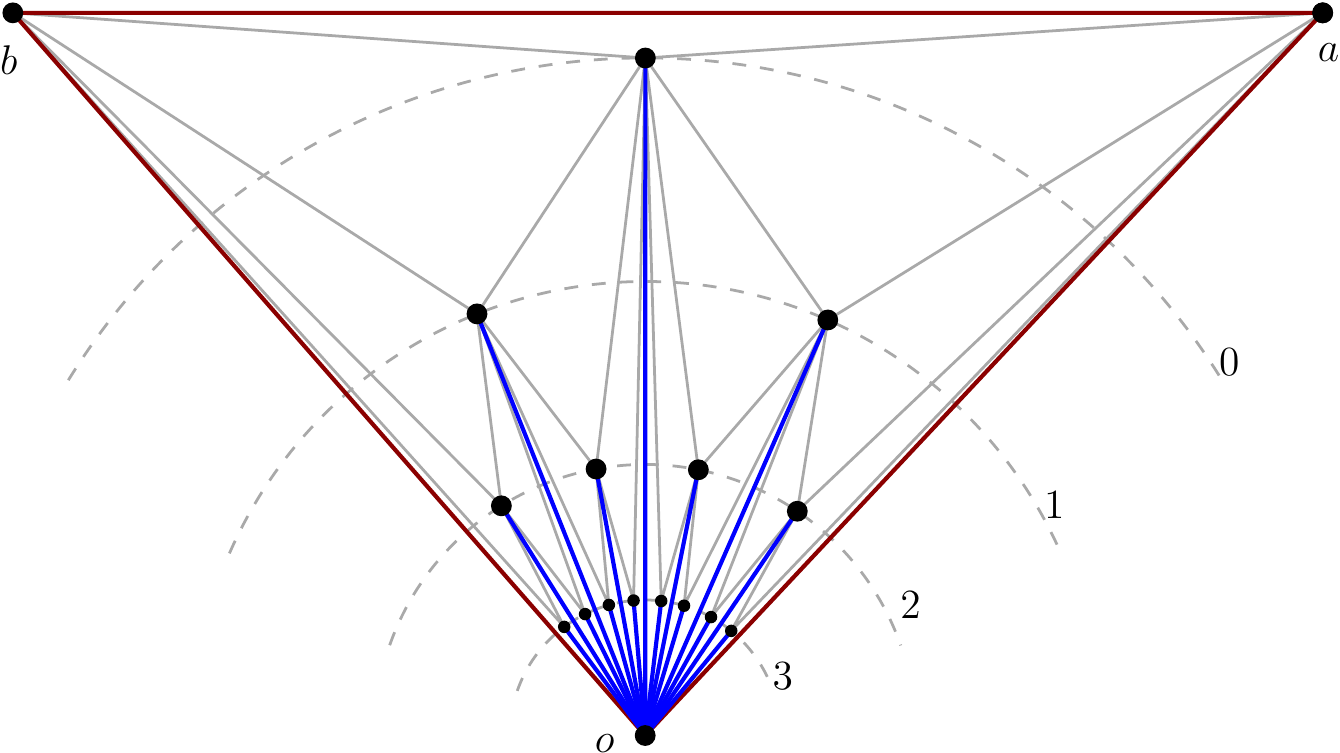}
\caption{A schematic illustration of the triangulation $T$. The radii
  of the circles $C_i$, $i=0,1,\ldots,\ell-1$, converge to 0 much faster
  than indicated in the figure.}
\label{fig:mp}
\end{figure}

The rays to the points on $C_i$ are interspersed with the rays to
points on all previous layers. Specifically, the $2^i$ points on
circle $C_i$ are incident to rays emitted from the origin in
directions $\frac{\pi}{4}+\frac{2j-1}{4\cdot 2^{i}} \pi$ for $j=1,\ldots , 2^i$.
The edges of the triangulation are defined as follows.
The origin $o$ is connected to all other vertices. Each vertex on circle $i$ is
connected to the two vertices of the previous layers that are closest
in angular order. The radii of the circles $C_i$ are chosen
recursively for $i=0,1,\ldots,\ell-1$, such that the edges that
connect a vertex $v\in C_i$ to vertices $v'$ and $v''$ on larger circles
are almost parallel to $ov'$ and $ov''$, respectively. Specifically, we require
that $\angle vv'o<\pi/2^{\ell+1}$ and $\angle vv''o<\pi/2^{\ell+1}$
(so these angles are less than the angle between any two
consecutive edges incident to $o$).

\paragraph{Maximal monotone paths.}
In the argument, we sometimes focus on monotone paths that are \emph{maximal}
(with respect to containment). This is justified by the following easy lemma.
\begin{lemma}\label{lem:maximalmon}
Let $T$ be a triangulation of a point set $S$ in the plane.
\begin{itemize} \itemsep -1pt
\item[{\rm (i)}] The two endpoints of a maximal monotone path in $T$ are vertices
  of the convex hull $\conv(S)$.
\item[{\rm (ii)}] If $T$ contains $m$ maximal monotone paths, and every
  such path has at most $k$ vertices, then the total number of
  monotone paths in $T$ is at most $m {k\choose 2}$.
\end{itemize}
\end{lemma}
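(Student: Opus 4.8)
The plan is to prove the two parts of Lemma~\ref{lem:maximalmon} separately, both by elementary geometric arguments.

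For part~(i), I would argue by contradiction. Suppose $\xi=(v_1,\ldots,v_t)$ is a monotone path in direction $\mathbf{u}$ whose endpoint $v_1$ (say) is an interior vertex of $\conv(S)$, i.e.\ not on the convex hull. Since $\xi$ is monotone in direction $\mathbf{u}$, the vertex $v_1$ is the unique ``first'' vertex, meaning $\langle v_1,\mathbf{u}\rangle<\langle v_i,\mathbf{u}\rangle$ for all $i>1$ along $\xi$; equivalently, $v_1$ is the vertex of $\xi$ minimizing the $\mathbf{u}$-coordinate. The key observation is that because $v_1$ lies strictly inside $\conv(S)$ and $T$ triangulates $S$, every neighborhood of $v_1$ meets $T$ in a full $2\pi$ wedge of triangles, so among the edges of $T$ incident to $v_1$ there must be at least one edge $v_1w$ with $\langle \overrightarrow{v_1 w},\mathbf{u}\rangle>0$ — indeed, the edges around an interior vertex span all directions, so not all of them can point into the closed halfspace $\{\mathbf{x}:\langle\overrightarrow{v_1\mathbf{x}},\mathbf{u}\rangle\le 0\}$. (One must only take care that no edge is exactly orthogonal to $\mathbf{u}$; if some are, perturb $\mathbf{u}$ slightly, which does not affect monotonicity of $\xi$ since $\xi$ has finitely many edges each with a strictly positive scalar product.) Then $(w,v_1,v_2,\ldots,v_t)$ is a strictly longer monotone path in direction $\mathbf{u}$, contradicting maximality. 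The same argument applied to the other endpoint $v_t$ (maximizing the $\mathbf{u}$-coordinate) shows it too must be a hull vertex.

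For part~(ii), the argument is a straightforward counting reduction. Every monotone path $\eta$ in $T$, monotone in some direction $\mathbf{u}$, extends to a maximal monotone path: keep prepending and appending edges with positive $\mathbf{u}$-scalar product as long as possible; this terminates because $S$ is finite and the $\mathbf{u}$-coordinates along the path are strictly increasing. Fix, for each monotone path $\eta$, one such maximal extension $\Xi(\eta)$ (choosing arbitrarily if several exist). Now $\eta$ is determined by $\Xi(\eta)$ together with the choice of its two endpoints among the vertices of $\Xi(\eta)$: since $\eta$ is a contiguous subpath of the path $\Xi(\eta)$, specifying the first and last vertex of $\eta$ on the vertex sequence of $\Xi(\eta)$ recovers $\eta$ exactly. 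If $\Xi(\eta)$ has at most $k$ vertices, there are at most $\binom{k}{2}$ ways to pick an (unordered) pair of distinct endpoints, plus we should note the degenerate single-vertex paths are usually excluded; in any case the count is at most $\binom{k}{2}$ subpaths per maximal path. Summing over the $m$ maximal monotone paths gives the bound $m\binom{k}{2}$.

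The only subtle point — and the main thing to be careful about — is the handling of edges orthogonal to $\mathbf{u}$ in part~(i): the claim ``an interior vertex has an incident edge with positive $\mathbf{u}$-scalar product'' is false if we are forced to use a $\mathbf{u}$ for which some incident edge is orthogonal (e.g.\ a vertex with exactly two incident edges, both orthogonal to $\mathbf{u}$ — though this cannot happen at an interior vertex of a triangulation, since an interior vertex has degree $\ge 3$ and its incident edges cannot all lie on one line). In fact, since an interior vertex in a triangulation has its incident edges not all contained in a closed halfplane through it (they surround it), for \emph{any} direction $\mathbf{u}$ at least one incident edge has strictly positive, and at least one strictly negative, $\mathbf{u}$-scalar product; this makes the perturbation remark unnecessary, but it is safest to state it as a clean observation. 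Everything else is routine.
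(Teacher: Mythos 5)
Your proposal is correct and follows essentially the same route as the paper: for (i) you use the fact that the edges around an interior vertex of a triangulation are not confined to a closed halfplane (the paper phrases this as every consecutive angle being an interior angle of a triangle, hence less than $\pi$) to extend the path, contradicting maximality; for (ii) you extend each monotone path to a maximal one and observe it is the contiguous subpath determined by its two endpoints, giving at most $\binom{k}{2}$ subpaths per maximal path. Your extra care about edges orthogonal to $\mathbf{u}$, and your observation that it is in fact unneeded at an interior vertex, is sound but does not change the argument.
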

\begin{proof}
(i) Let $\xi=(v_1,\ldots , v_t)$ be a maximal path in $T$ that is monotone
in direction $\mathbf{u}\in \mathbb{R}^2 \setminus \{\mathbf{0}\}$.
Suppose that the endpoint $v_t$ lies in the interior of $\conv(S)$.
Note that the angle between any two consecutive edges of $T$ incident to
an interior point of $\conv(S)$, to $v_t$ in particular, is less than $\pi$, since
each angle is an interior angle of a triangle.
Consequently, there is an edge $v_tw$ in $T$ such that
$\langle \overrightarrow{v_tw},\mathbf{u}\rangle>0$. Now
the path $(v_1,\ldots, v_t,w)$ is monotone in direction $\mathbf{u}$,
and strictly contains $\xi$, in contradiction with the maximality of $\xi$.

\smallskip
\noindent (ii) Every subpath of a monotone path is also monotone (in
the same direction). A path with $t \geq 2$ vertices has exactly ${t \choose 2}$
subpaths, each determined by the two endpoints, and so the claim follows.
\end{proof}

\paragraph{Analysis.}
We say that a (directed) edge or a path is \emph{upward} if it is $(0,1)$-monotone,
and \emph{downward} if it is $(0,-1)$-monotone. (The horizontal edge $ab$ is
neither upward nor downward). Clearly, every upward path is monotone
in direction $(0,1)$, but a path that contains both upward and
downward edges may be monotone in some other direction $\mathbf{u}\in
\mathbb{R}^2\setminus\{\mathbf{0}\}$. Since $o$ is the point with the
minimum $y$-coordinate in $T$, every maximal upward path starts from
$o$ and ends at $a$ or $b$. Consequently every upward path has at most
one vertex on each circle, thus at most $\ell+2=O(\log n)$ vertices
overall. In the remainder of the analysis, we bound the number of
upward paths in $T$, and then show that every monotone path is composed of
a small number (bounded by a constant) of upward and downward subpaths.

\begin{lemma} \label{lem:upwardpath}
The number of maximal upward paths in $T$ is $O(n^{\log 3})$.
The number of upward paths is $O(n^{\log 3} \log n) =O(n^{1.585})$.
\end{lemma}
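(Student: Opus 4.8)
}
The plan is to exploit the layered, self-similar structure of $T$: the interior vertices sit on circles $C_0,\ldots,C_{\ell-1}$ with $2^i$ vertices on $C_i$, and each vertex on $C_i$ is joined only to the two angularly closest vertices on the previous (larger) layers. First I would record the key structural fact, essentially forced by the construction: a maximal upward path starts at $o$, and each time it leaves a vertex $v$ it must move to a vertex on a strictly smaller circle (or to $a$ or $b$), because the near-parallel condition $\angle vv'o<\pi/2^{\ell+1}$ guarantees that the edge from $v$ to a smaller-circle neighbor points ``almost radially outward toward $o$'', i.e.\ downward, so only the (at most two) edges going to larger circles — and finally the edges to $a,b$ — can be upward. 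Hence an upward path visits the circles in order of decreasing radius, at most one vertex per circle, and from any vertex it has at most two choices for the next vertex on a larger circle. This immediately yields an exponential-in-$\ell$ bound, but $2^\ell = \Theta(n)$ would be too weak; the point of the circle sizes doubling is that the branching is not a free binary choice at every level.

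The heart of the argument is a recursive count. I would set up a quantity $f(i)$ = the number of upward paths that start at a fixed vertex of $C_i$ and go outward (to larger circles, ending at $a$ or $b$), and argue a recurrence of the form $f(i) \le f(i-1) + f(i-2) + O(1)$ or, more cleanly, bound the number of maximal upward paths through a divide-and-conquer on the angular interval. The cleaner route: a maximal upward path is determined by $o$, then its first non-$o$ vertex, then how it proceeds; organizing by the angular ``funnel'' structure, the number $M(\ell)$ of maximal upward paths in the $\ell$-layer instance should satisfy $M(\ell) \le 3\,M(\ell-1)$, because the new outermost layer $C_{\ell-1}$ contributes each of its $2^{\ell-1}$ vertices as a possible second vertex but the tail from there lives in (an angular copy of) a smaller instance, and the careful bookkeeping of ``which of the two larger-circle neighbors you take'' collapses the count to a factor $3$ per level rather than a factor $2^{\text{level size}}$. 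Solving $M(\ell)\le 3^\ell \cdot M(0) = O(3^\ell)$ and substituting $\ell = \log(n-2) = \Theta(\log n)$ gives $M(\ell) = O(3^{\log n}) = O(n^{\log 3})$, which is the claimed $O(n^{1.585})$ bound on maximal upward paths. For the second statement, I would invoke Lemma~\ref{lem:maximalmon}(ii): every maximal upward path has at most $\ell+2 = O(\log n)$ vertices, so it contains $O(\log^2 n)$ subpaths; but actually, since a subpath of an upward path that still starts at a hull vertex... more simply, the total number of upward paths is at most (number of maximal upward paths) times $\binom{\ell+2}{2} = O(n^{\log 3}\log^2 n)$ — and since the lemma only claims $O(n^{\log 3}\log n)$, I would note that every upward path extends \emph{uniquely} backward toward $o$ (each vertex has at most one upward in-edge on a larger circle... wait, it has at most two), so I would instead bound: each upward path is a prefix-free... rather, each upward path is the initial segment of a maximal upward path, and a maximal path of length $L$ has only $L = O(\log n)$ prefixes, giving $O(n^{\log 3}\log n)$ directly.

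The main obstacle, and the step requiring real care, is pinning down the exact branching recurrence — i.e.\ proving that the geometry forces the ``factor $3$ per layer'' (or equivalently the Tribonacci-type recurrence $T(i)\le T(i-1)+T(i-2)+T(i-3)$ in the per-vertex formulation) rather than something larger. This hinges entirely on the near-parallelism hypothesis: I must verify that for a vertex $v\in C_i$, among all its $T$-edges, only the two edges to its chosen neighbors on the union of larger circles — and possibly one further ``long'' edge to $a$ or $b$ — can have positive scalar product with $(0,1)$, while the edge to $o$ and any edge to a smaller circle point downward. This is exactly where $\angle vv'o<\pi/2^{\ell+1}$ is used: it ensures the edge $vv'$ (with $v'$ on a larger circle, angularly close) makes an angle with the radial direction $\overrightarrow{vo}$ small enough that its sign in the $y$-direction is governed by $v'$'s position, and symmetrically the edge to a smaller-circle vertex is ``almost radial inward,'' hence downward. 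Once this local claim is nailed, the global count is a routine induction on $\ell$.
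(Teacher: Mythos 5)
Your high-level plan matches the paper's: count upward paths layer by layer, obtain a growth factor of $3$ per layer, conclude that there are $O(3^\ell)=O(n^{\log 3})$ maximal upward paths, and pay an extra $O(\log n)$ factor for the non-maximal ones. But the one step you explicitly defer --- ``pinning down the exact branching recurrence'' --- is the entire content of the lemma, and neither of the two recurrences you float is justified (and one is wrong). The correct recurrence comes from the interspersing of the rays: the $2^i$ points of $C_i$ and the $2^i-1$ points of $\bigcup_{j<i}C_j$ alternate in angular order, so every vertex $w$ on a previous layer has exactly two angular neighbours on $C_i$, and the two extreme vertices of $C_i$ see $a$ and $b$ directly. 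Hence, writing $\tau_i$ for the number of upward paths from a vertex of $C_i$ to $a$ or $b$, each upward path from $w$ extends backward to exactly two paths starting on $C_i$, giving $\tau_i=2\sum_{j<i}\tau_j+2$, i.e.\ $\tau_i=3\tau_{i-1}=2\cdot 3^i$. Your ``equivalent'' per-vertex Tribonacci recurrence $T(i)\le T(i-1)+T(i-2)+T(i-3)$ is not equivalent and does not hold here (it belongs to the $x$-monotone path upper bound elsewhere in the paper, and indexed per vertex it would give an exponential bound), while $M(\ell)\le 3M(\ell-1)$ is asserted rather than derived.

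Second, your derivation of the $O(n^{\log 3}\log n)$ bound for all upward paths from the bound on maximal ones is flawed: every maximal upward path starts at $o$, so a general upward path (which may start on any circle) is a \emph{contiguous subpath}, not a prefix, of a maximal one; counting subpaths costs $\binom{\ell+2}{2}=\Theta(\log^2 n)$ per maximal path, one logarithmic factor too many. The paper sidesteps this by letting $\tau=\sum_i\tau_i\le 3^\ell$ count upward paths that \emph{end at $a$ or $b$ but start anywhere}; every upward path extends forward to such a path and is therefore one of its $O(\log n)$ prefixes, which yields $O(\tau\log n)=O(n^{\log 3}\log n)$. (A minor point symptomatic of the same confusion: an upward path visits the circles in order of \emph{increasing} radius, i.e.\ decreasing index, moving away from $o$, not ``decreasing radius'' as you write.)
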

\begin{proof}
Let $\tau_i$ denote the total number of upward paths that start from
a point on the circle $C_i$ and end at $a$ or $b$. Observe that $\tau_0=2$
and $\tau_1=3+3=6$. Recall that by construction, each vertex
on $C_i$ is connected to the two vertices of the previous layers that are closest
in angular order (left and right). It follows that
\begin{equation} \label{eq:21}
\tau_i= 2 \sum_{j=0}^{i-1} \tau_j  + 2, \text{ for } i \geq 1,
\end{equation}
where the term $2$ counts the direct edges to $a$ and $b$ from the
leftmost and the rightmost points of $C_i$, respectively.

We now prove that $\tau_i= 2 \cdot 3^i$ for $i=0,1,\ldots,\ell-1$.
We proceed by induction on $i$. The base case $i=0$ is satisfied as
verified above by the value $\tau_0=2$. For the induction step,
assume that the formula holds up to $i$. According to~\eqref{eq:21} we have
$$\tau_{i+1}
= 2 \sum_{j=0}^{i} \tau_j  + 2
= 2 \left( 2 \sum_{j=0}^i 3^j \right) +2
= 2 \cdot 2 \cdot \frac{3^{i+1} -1}{2} + 2
= 2 \cdot 3^{i+1} -2 + 2 = 2 \cdot 3^{i+1},
$$
as required. Write $\tau= \sum_{i=0}^{\ell-1} \tau_i$. Recall that $n=2^\ell+2$, hence
\begin{equation} \label{eq:22}
\tau = \sum_{i=0}^{\ell-1} \tau_i \leq 3^\ell \leq 3^{\log{n}} = n^{\log{3}}.
\end{equation}

It follows that the number of upward paths starting from
vertices on $\bigcup_{i=0}^{\ell-1} C_i$ and ending at $a$ or $b$
is $\tau$, and further, that the number of upward paths
from $o$ or $\bigcup_{i=0}^{\ell-1} C_i$ to $a$ or $b$ is  $2\tau$.
Since every such path has at most $\ell+2$ vertices, the total number of
upward paths is at most $(\ell +1)\tau=O(n^{\log 3}\log n)$.
\end{proof}

\begin{corollary} \label{cor:from-o}
The number of upward paths in $T$ starting from $o$ is
$O(n^{\log 3}) = O(n^{1.585})$.
\end{corollary}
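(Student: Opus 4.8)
Looking at Corollary \ref{cor:from-o}, I need to bound the number of upward paths in $T$ starting from $o$.

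The plan is to adapt the counting argument from Lemma \ref{lem:upwardpath} directly, but now only counting paths that emanate from the apex $o$. The key observation is that every maximal upward path in $T$ starts at $o$ (as established in the analysis paragraph, since $o$ has the minimum $y$-coordinate), so ``upward paths starting from $o$'' really means ``prefixes of maximal upward paths''. First I would recall that an upward path from $o$ proceeds by visiting at most one vertex on each circle $C_i$, moving outward, and terminates either at a vertex of some $C_i$, or at $a$ or $b$. So such a path is determined by choosing, for each circle it passes through, which of the two ``parent'' edges to ascend (together with the choice of starting vertex on $C_{\ell-1}$, the innermost circle).

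The main step is to show that the number of upward paths from $o$ ending on circle $C_i$ is $O(3^{\,\ell-1-i})$, mirroring the recurrence \eqref{eq:21} but read ``from the inside out''. Indeed, by the construction, each vertex on $C_i$ has exactly two neighbors on circles $C_0,\dots,C_{i-1}$ closest in angular order, so reversing the direction of \eqref{eq:21}: if $\sigma_i$ denotes the number of upward paths from $o$ to a vertex on $C_i$, then a path from $o$ to $C_i$ is obtained by extending a path from $o$ that ends on some circle $C_j$ with $j<i$ (or starts directly at $o$), which yields a bound of the same flavor, $\sigma_i = O(3^i)$; summing $\sum_{i=0}^{\ell-1}\sigma_i = O(3^\ell) = O(n^{\log 3})$, and then adding the two paths from $o$ straight to $a$ and $b$ and all paths $o \to \cdots \to C_{\ell-1}$-vertex. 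Even more simply, the total number of upward paths starting from $o$ is at most the number of upward paths from $o$ to $a$ or $b$ (which is $2\tau = O(n^{\log 3})$ by \eqref{eq:22}) plus the number that terminate prematurely at an interior vertex; but every such premature path is a prefix of a maximal one, and prefixes are already counted by the subpath bound. So the cleanest route: each upward path from $o$ is a subpath (prefix) of a maximal upward path; there are $O(n^{\log 3})$ maximal upward paths by Lemma \ref{lem:upwardpath}; and crucially, from a \emph{fixed} start vertex $o$, a maximal path with $t$ vertices has only $t$ prefixes, not $\binom{t}{2}$ subpaths. Since $t \le \ell+2 = O(\log n)$, this would only give $O(n^{\log 3}\log n)$, which is weaker than claimed.

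The hard part — and the reason the corollary's bound is $O(n^{\log 3})$ without the logarithmic factor — is shaving off the $\log n$. The resolution is that I should not go through ``prefixes of maximal paths'' but instead bound upward paths from $o$ directly via their \emph{endpoint}: an upward path from $o$ is uniquely determined by its last vertex together with the sequence of ascent choices, and since each vertex $w$ on $C_i$ is the endpoint of exactly $\sigma_i$ upward paths from $o$ with $\sum_i \sigma_i \cdot (\#\text{vertices on }C_i)$ telescoping against the $2\cdot 3^i$ count, the total is $2 + \sum_{i=0}^{\ell-1}(\text{number of upward paths from }o\text{ ending on }C_i)$. By the same induction as in Lemma \ref{lem:upwardpath} (run with $o$ as the fixed source rather than summing over all sources), this sum is $\Theta(\sum_i 3^i) = \Theta(3^\ell) = \Theta(n^{\log 3})$, with no extra $\log$. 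So the proof is: reverse the recurrence \eqref{eq:21} to count paths by their outer endpoint instead of their inner endpoint, observe the identical geometric-series bound, and conclude. I would write it as a two-sentence corollary proof invoking exactly this reindexing of the argument in Lemma \ref{lem:upwardpath}.

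\begin{proof}
Every maximal upward path in $T$ starts at $o$, since $o$ is the unique vertex of minimum $y$-coordinate. Hence an upward path starting at $o$ is obtained from some maximal upward path by truncating it at an intermediate vertex, i.e., it is a prefix of a maximal upward path. Counting such paths by their \emph{last} vertex, and re-running the induction in the proof of Lemma~\ref{lem:upwardpath} with $o$ fixed as the source, the number of upward paths from $o$ ending on circle $C_i$ is $\Theta(3^i)$ (each vertex on $C_i$ has two neighbors on earlier layers, so the number of such paths satisfies the same recurrence as~\eqref{eq:21}). Summing over $i=0,1,\ldots,\ell-1$, and adding the two direct edges $oa$ and $ob$, the total number of upward paths starting from $o$ is $\Theta\!\left(\sum_{i=0}^{\ell-1} 3^i\right)=\Theta(3^\ell)=O(n^{\log 3})$, as $n=2^\ell+2$.
\end{proof}
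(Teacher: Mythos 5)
You correctly identified the crux: a naive ``prefix of a maximal path'' count loses a factor of $\log n$, so a sharper argument is needed. But the replacement you propose contains a genuine error. Your key claim is that the number of upward paths from $o$ ending on circle $C_i$ is $\Theta(3^i)$ because it ``satisfies the same recurrence as~\eqref{eq:21}''. This is false, and the justification is a non sequitur. Recurrence~\eqref{eq:21} counts paths \emph{leaving} a vertex of $C_i$ toward $a$ or $b$, and it works because each vertex of $C_i$ has exactly two \emph{outgoing} upward edges (to the two angularly closest vertices on outer layers). An upward path from $o$ \emph{arriving} at a vertex $w\in C_i$ enters $w$ along an \emph{incoming} upward edge, and $w$ has two such in-neighbors on \emph{each} of the inner circles $C_{i+1},\ldots,C_{\ell-1}$ (plus the edge from $o$), so the reversed recurrence has a different shape and does not simply ``reindex'' to~\eqref{eq:21}. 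Concretely: for $i=\ell-1$ the only upward edge into a vertex of $C_{\ell-1}$ comes from $o$, so the count is exactly $2^{\ell-1}$, not $\Theta(3^{\ell-1})$; and for $i=0$ the single vertex of $C_0$ already receives $3^{\ell-1}=\Theta(3^{\ell})$ upward paths from $o$, not $\Theta(1)$. The correct per-layer bound is $O(2^i\,3^{\ell-i})$; your final sum happens to come out as $\Theta(3^\ell)$ only because both geometric sums are dominated by the same term.

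The idea you are missing is the one the paper's proof actually uses: for a fixed vertex $v_0\in C_i$ with upward out-neighbors $v_1,v_2$, every upward path from $o$ to $v_0$ lies in $\Delta o v_0 v_1\cup \Delta o v_0 v_2$, and the subgraph of $T$ inside each of these triangles is isomorphic to the same construction on $2^{\ell-i}+2$ vertices. Applying the count from Lemma~\ref{lem:upwardpath} to these smaller copies gives $O(3^{\ell-i})$ upward paths from $o$ to $v_0$, and summing $2^i\cdot O(3^{\ell-i})$ over $i=0,\ldots,\ell-1$ yields $O(3^\ell)=O(n^{\log 3})$ with no logarithmic loss. Without this self-similarity step (or some equivalent analysis of the in-edge structure at each vertex), your argument does not go through.
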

\begin{proof}
Let $v_0$ be a vertex on $C_i$, incident to two upward edges $(v_0,v_1)$ and $(v_0,v_2)$.
Then all upward paths from $o$ to $v_0$ lie in the union of two triangles
$\Delta{o v_0 v_1} \cup \Delta{o v_0 v_2}$.
The subgraph of $T$ lying in (the interior or on the boundary of) each
of these triangles is isomorphic to the analogue of $T$ on
$2^{\ell-i}+2$ vertices, with the same upward-downward
orientations. As shown in the proof of Lemma~\ref{lem:upwardpath},
there are $O(3^{\ell-i})$ upward paths from $o$ to $v_0$ in
each of the triangles $\Delta{o v_0 v_1}$ and $\Delta{o v_0 v_2}$,
and so $T$ contains $O(3^{\ell-i})$ upward paths from $o$ to $v_0$.
Summing over all vertices of $T$, the total number of upward paths starting from $o$ is
$$ O\left( \sum_{i=0}^{\ell-1} 2^i \cdot 3^{\ell-i} \right) =
O\left( 3^\ell \sum_{i=0}^{\ell-1} \left( \frac23 \right)^i \right) =
O\left(3^\ell \right) = O\left(3^{\log n}\right) = O\left(n^{\log 3}\right), $$
as claimed.
\end{proof}

Lemma~\ref{lem:upwardpath} also yields the following.
\begin{corollary} \label{cor:upwardpath}
The number of monotone paths in $T$ composed of one upward path and one downward path
is $O(n^{2\log 3} \log^2 n) = O(n^{3.17})$.
\end{corollary}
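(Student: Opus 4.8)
The plan is to realize each monotone path of this kind as the concatenation of an upward path and a downward path glued at a single vertex, and then to bound the number of such gluings by the product of two quantities, both already controlled by Lemma~\ref{lem:upwardpath}.

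I would first record a symmetry: reversing the vertex sequence of a path turns a $(0,1)$-monotone path into a $(0,-1)$-monotone path and conversely, so the family of downward paths in $T$ coincides with the family of upward paths in $T$. In particular, by Lemma~\ref{lem:upwardpath}, there are $O(n^{\log 3}\log n)$ downward paths, exactly as there are $O(n^{\log 3}\log n)$ upward paths.

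Next, let $\xi$ be a monotone path composed of one upward subpath $\xi_1$ and one downward subpath $\xi_2$. Since $\xi_1\cup\xi_2=\xi$ is a simple path and no edge is simultaneously upward and downward, $\xi_1$ and $\xi_2$ are edge-disjoint and share exactly one vertex $w$, which is an endpoint of each. Reading $\xi$ from one end to the other, the $y$-coordinates of its vertices then increase along one of $\xi_1,\xi_2$ and decrease along the other, with $w$ the unique turning vertex; hence $\xi$ determines the unordered pair $\{\xi_1,\xi_2\}$. (The unique horizontal edge $ab$, if it appears in $\xi$, must occur at this turning vertex; paths through $ab$ can be counted separately and fall well within the bound below.) Consequently $\xi\mapsto\{\xi_1,\xi_2\}$ is an injection into the set of unordered pairs of (upward $=$ downward) paths.

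Finally, the number of such pairs is at most $\bigl(O(n^{\log 3}\log n)\bigr)^2=O(n^{2\log 3}\log^2 n)=O(n^{3.17})$, where $2\log 3=\log 9<3.17$ absorbs the logarithmic factors. The only step needing any care is the uniqueness claim in the previous paragraph — verifying that a path ``composed of one upward path and one downward path'' genuinely splits in a single way at its lone $y$-extremal vertex, and disposing of the horizontal edge $ab$; everything else is a direct product count off Lemma~\ref{lem:upwardpath}.
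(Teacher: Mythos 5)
Your argument is correct and matches the paper's (implicit) proof: the paper derives Corollary~\ref{cor:upwardpath} directly from Lemma~\ref{lem:upwardpath} by the same product count over pairs consisting of one upward and one downward path. Your added care about the uniqueness of the splitting vertex and the horizontal edge $ab$ is sound but not a departure from that route.
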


It remains to consider monotone paths in $T$ that cannot be decomposed into
one upward path and one downward path.\footnote{Such paths were
  inadvertently overlooked in the analysis from~\cite{LST13}.}
Every such path changes vertical direction (upward vs downward) at least twice.
\begin{figure}[htbp]
\centering
\includegraphics[scale=0.87]{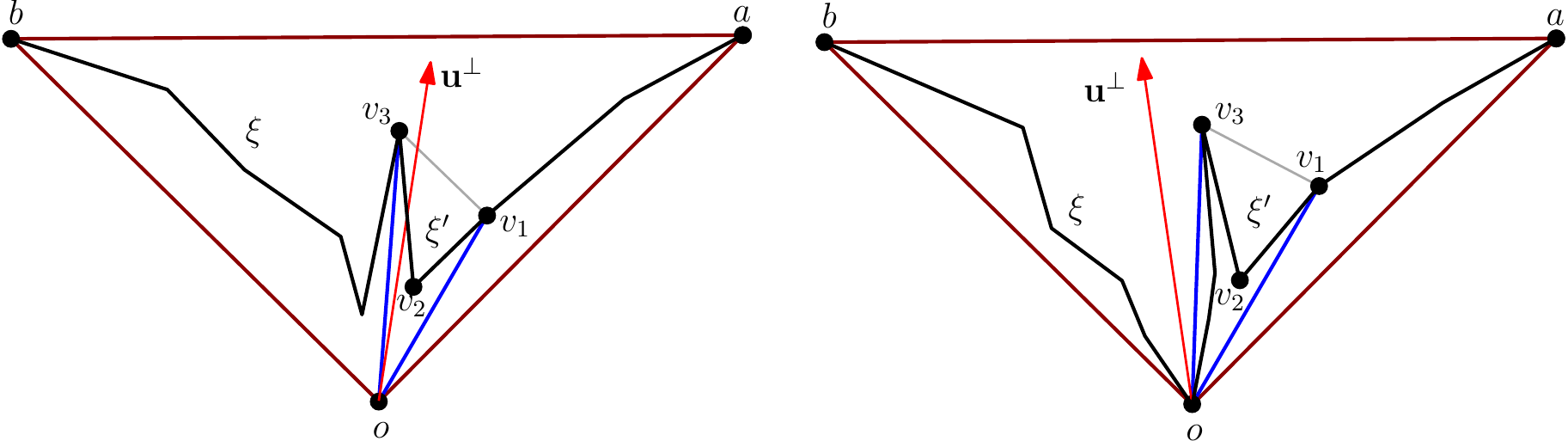}
\caption{Two schematic pictures of a subpath $\xi'=(v_1,v_2,v_3)$ in $T$
  such that the edge $(v_1,v_2)$ is downward, the edge $(v_2,v_3)$ is upward, and $v_2\neq o$.
Left: A maximal monotone path $\xi$ containing $\xi'$, where $\xi \setminus \xi'$
lies in the exterior of $\Delta{o v_1 v_3}$.
Right: A maximal monotone path $\xi$ containing $\xi'$, where part of
$\xi \setminus \xi'$ lies inside $\Delta{o v_1 v_3}$.}
\label{fig:v123}
\end{figure}

\begin{lemma}\label{lem:modus}
The total number of monotone paths in $T$ that change vertical
direction at least twice is $O(n^{2\log 3} \log^2 n) = O(n^{3.17})$.
\end{lemma}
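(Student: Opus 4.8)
The plan is to show that a monotone path that changes vertical direction at least twice has only a bounded number of maximal ``direction runs,'' and that one of these runs must be ``short'' in a strong sense, so that the whole path is determined by a bounded number of upward/downward subpaths together with a bounded amount of extra combinatorial data, each factor of which we can already bound using Lemma~\ref{lem:upwardpath} and Corollary~\ref{cor:from-o}.

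First I would set up the decomposition. Let $\xi$ be a monotone path in $T$ (monotone in some direction $\mathbf{u}$) that changes vertical direction at least twice. Write $\xi$ as a concatenation of maximal monotone upward/downward runs $\xi = \rho_1 \rho_2 \cdots \rho_r$, where consecutive runs alternate between upward and downward and $r \ge 3$. The key geometric observation I would establish is that the number of runs $r$ is bounded by a constant. To see this, recall from the construction that $o$ is the unique lowest vertex, that every maximal upward (resp. downward) path runs between $o$ and $\{a,b\}$, and that the ``change of vertical direction'' at an interior vertex $v_2 \ne o$ (as in Figure~\ref{fig:v123}) forces $v_2$ to be a local extremum in the $y$-coordinate along $\xi$. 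Since $\xi$ is $\mathbf{u}$-monotone, consecutive runs sharing such a turning vertex $v_2$ must lie on opposite sides of the line through $v_2$ orthogonal to $\mathbf{u}$; combined with the fact that the edges incident to any interior vertex of $T$ point into a half-plane determined by $o$ (the two edges $(v_2,v_1)$, $(v_2,v_3)$ together with the edge $ov_2$ bound the triangles $\Delta o v_1 v_3$), one concludes that after at most a constant number of turns the path $\xi$ is ``trapped'': it must either reach $\{a,b\}$ or return toward $o$, and cannot keep oscillating. I expect a careful case analysis here (using the angle conditions $\angle v v' o < \pi/2^{\ell+1}$ of the construction, which say that edges to larger circles are nearly radial) to yield $r \le c$ for an absolute constant $c$, say $c = 4$ or so.

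Next, given that $r \le c$, each run $\rho_s$ is an upward or downward path in $T$. A maximal upward or downward path has $O(\log n)$ vertices, so each $\rho_s$ has $O(\log n)$ vertices and there are, by Lemma~\ref{lem:upwardpath}, at most $O(n^{\log 3}\log n)$ choices for $\rho_s$ individually \emph{if it were maximal}; an arbitrary (not necessarily maximal) upward or downward path is a subpath of a maximal one, contributing an extra $\binom{O(\log n)}{2} = O(\log^2 n)$ factor, i.e. $O(n^{\log 3}\log^3 n)$ choices per run. Naively multiplying over $c$ runs gives $O(n^{c\log 3}\,\mathrm{polylog}\,n)$, which is too weak. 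The fix — and this is the main obstacle — is to exploit that after the first turn the remaining runs live inside a much smaller copy of $T$: as in the proof of Corollary~\ref{cor:from-o}, once $\xi$ turns at a vertex $v_2 \in C_i$, the portion of $\xi$ on the ``$o$-side'' of $v_2$ lies inside $\Delta o v_1 v_3 \cup \Delta o v_2 v_3'$, each isomorphic to the analogue of $T$ on $2^{\ell-i}+2$ vertices. So I would argue that among the $r$ runs, all but the first two lie entirely inside such a sub-triangulation on $O(2^{\ell - i})$ vertices for some $i$, and summing $\sum_i 2^{\alpha i} \cdot (2^{\ell-i})^{\beta}$ geometrically (exactly as in the displayed geometric sum in the proof of Corollary~\ref{cor:from-o}, which converges because $2/3 < 1$) collapses the extra factors. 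The upshot is that the dominant contribution comes from paths made of essentially two long runs plus bounded debris, giving the same bound $O(n^{2\log 3}\log^2 n)$ already obtained in Corollary~\ref{cor:upwardpath} for the two-run case.

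Finally I would assemble the count: fix the (constantly many) turning vertices and the run structure; the first run and the last run are upward/downward paths counted by Lemma~\ref{lem:upwardpath} at cost $O(n^{\log 3}\log n)$ each; every intermediate run lies in a sub-triangulation on $O(2^{\ell-i})$ vertices and, by the recursive version of Corollary~\ref{cor:from-o} (upward paths from the apex $o$ of that sub-triangulation number $O(3^{\ell-i})$), the intermediate runs contribute only an $O(1)$ factor after the geometric summation over $i$. Multiplying the two dominant $O(n^{\log 3}\log n)$ factors and including the $O(\log^2 n)$ from passing to non-maximal subpaths via Lemma~\ref{lem:maximalmon}(ii) yields the claimed bound $O(n^{2\log 3}\log^2 n) = O(n^{3.17})$. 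I expect the delicate point to be making the ``$r$ is bounded'' claim fully rigorous — that a $\mathbf{u}$-monotone path in this specific triangulation cannot oscillate vertically more than a constant number of times — since everything after that is a geometric-series bookkeeping argument modeled on the proofs already given for Corollaries~\ref{cor:from-o} and~\ref{cor:upwardpath}.
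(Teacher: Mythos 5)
Your high-level plan --- decompose $\xi$ into maximal vertical runs, show the number of runs is bounded by a constant, and then arrange the count so that only two factors of $n^{\log 3}$ are ever multiplied --- is indeed the skeleton of the paper's argument. But the proposal defers precisely the part that constitutes the bulk of the paper's proof. The statement ``after at most a constant number of turns the path is trapped'' is not established by your sketch; the paper proves it via two rigidity claims (Claims~\ref{cl:1} and~\ref{cl:2}): once $\xi$ turns downward-to-upward at $v_2\neq o$ with upward neighbors $v_1,v_3$, the near-radial angle condition forces the portion of $\xi$ before $v_1$ (resp.\ after $v_3$) to be \emph{uniquely determined} edge by edge unless $\xi$ turns again at $v_1$ or $v_3$, and a second turn forces $\mathbf{u}^\perp$ to be within $\pi/2^{\ell+1}$ of $\overrightarrow{ov_3}$, which pins down the only remaining possibility: descend inside $\Delta o v_1 v_3$ to $o$ and then climb to $b$. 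Without an argument of this type you have neither the bound on $r$ nor the crucial fact that the first run (from $a$ to $v_2$) is free --- it is uniquely determined by $v_2$, not merely ``a downward path counted by Lemma~\ref{lem:upwardpath}.''

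The bookkeeping in your final assembly is also not right as stated. In the four-run structure $a\to v_2\to v_3\to o\to b$, the last run (from $o$ to $b$) does \emph{not} lie in a small sub-triangulation --- it spans all of $T$ and is one of the two $O(n^{\log 3})$ factors --- so the claim ``all but the first two runs lie inside a sub-triangulation on $O(2^{\ell-i})$ vertices'' fails exactly where it matters. Conversely, the intermediate run from $v_3$ to $o$ contributes $\Theta(3^{\ell-i})$ choices (for $v_2\in C_i$), not $O(1)$; this factor is harmless only because it multiplies against at most $\tau_i=O(3^i)$ choices for the first run ending on $C_i$, giving $3^i\cdot 3^{\ell-i}=3^\ell$ uniformly in $i$ --- a computation you would need to carry out explicitly (the paper instead pays $O(n^{\log 3})$ for each of the last two runs and gets the first run for free via Claim~\ref{cl:1}). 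Finally, note that $\bigl(n^{\log 3}\log n\bigr)^2\cdot\log^2 n = n^{2\log 3}\log^4 n$; matching the stated $\log^2 n$ requires the paper's direct count of subpaths rather than a blanket application of Lemma~\ref{lem:maximalmon}(ii). You must also handle the case where $\xi$ leaves $v_3$ downward but exits $\Delta o v_1 v_3$ (the paper's Case~1), which your run-based accounting does not address.
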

\begin{proof}
We first characterize the monotone paths that change vertical direction at least twice,
showing that in fact they change vertical direction at most three times;
we then derive an upper bound on their number.

Let $\xi$ be a maximal monotone path in $T$ that changes vertical direction at least twice.
Refer to Figure~\ref{fig:v123}. By Lemma~\ref{lem:maximalmon}(i), the
endpoints of $\xi$ are vertices of the outer face $\Delta{oab}$. One
of the endpoints of $\xi$ is $a$ or $b$, where all upward edges point
to $a$ or $b$, respectively. Consequently, $\xi$ changes vertical
directions at a vertex in the interior of $\Delta{oab}$ from downward
to upward. That is, $\xi$ contains a subpath $\xi'=(v_1,v_2,v_3)$ such
that $(v_1,v_2)$ is downward, $(v_2,v_3)$ is upward, and $v_2\neq
o$. By symmetry, we can assume that $v_1$ and $v_3$ are on the right
and left sides of the ray $\overrightarrow{ov_2}$, respectively.
By construction, $v_1v_3$, $ov_1$, and $ov_3$ are edges of $T$, so $\Delta{o v_1 v_3}$ is a
3-cycle in $T$.

Since both $\overrightarrow{v_1v_2}$ and $\overrightarrow{v_2v_3}$ are
$\mathbf{u}$-monotone, for some vector $\mathbf{u}\neq \mathbf{0}$, the
orthogonal vector $\mathbf{u}^\perp$ lies between the directions of
$\overrightarrow{v_2v_1}$ and $\overrightarrow{v_2v_3}$.
By construction, these directions are within $\pi/2^{\ell+1}$ from the
directions of $\overrightarrow{ov_1}$ and $\overrightarrow{ov_3}$,
respectively. For every such vector $\mathbf{u}$, vertex $a$ is $\mathbf{u}$-minimal
and $b$ is $\mathbf{u}$-maximal, and so $\xi$ is a path from $a$ to $b$.

The edges of $\xi$ directly preceding and following
$\xi'=(v_1,v_2,v_3)$ have a crucial role in determining the edges in $\xi \setminus \xi'$.
The proof of Lemma~\ref{lem:modus} relies on the following two claims.

\begin{claim}\label{cl:1}
If $\xi$ does not change vertical direction at $v_1$, then $v_1$
uniquely determines the part of $\xi$ preceding $v_1$.
Similarly, if $\xi$ does not change vertical direction at $v_3$, then
$v_3$ uniquely determines the part of $\xi$ following $v_3$.
\end{claim}
\begin{proof}
By symmetry, it is enough to prove the second statement. If $v_3=b$,
the proof is complete.
Assume that $v_3\neq b$, and label the portion of $\xi$ starting at
$v_3$ by $(v_3,v_4,\ldots ,v_t)$.  Recall that by construction, from every interior vertex $v$,
there are two upward edges lying on opposite sides of the ray $\overrightarrow{ov}$.

We show by induction on $j=3,\ldots ,t$ that the edge $(v_j,v_{j+1})$ is upward, it lies on
the left of $\overrightarrow{ov_j}$, and the other upward edge starting from $v_j$
is incident to a vertex on or to the right of $\overrightarrow{ov_1}$.
In the base case, $j=3$, and by assumption $(v_3,v_4)$ is upward.
The upward edge on the right side of $\overrightarrow{ov_3}$ cannot enter the interior
of $\Delta{o v_1 v_3}$ and so it must go to a vertex on or to the right of $\overrightarrow{ov_1}$.
Thus this edge is not $\mathbf{u}$-monotone, and so $(v_3,v_4)$ must be the upward edge
that leaves $v_3$ on the left side of $\overrightarrow{ov_3}$.

Assume now $3<j<t$ and the induction hypothesis holds for $v_{j-1}$.
By construction, the downward edges staring from $v_j$ are almost parallel
to $\overrightarrow{v_jo}$, and so they are not $\mathbf{u}$-monotone.
By induction, an upward edge $(v_{j-1},r)$ is incident to a vertex on or to the right of
$\overrightarrow{ov_1}$. Since the two upward edges $(v_{j-1},v_j)$ and $(v_{j-1},r)$ are
incident to the same triangle of $T$, it follows that $(v_j,r)$ is an edge in $T$.
Hence, the upward edge on the right side of $\overrightarrow{ov_j}$ also
goes to a vertex on or to the right of $\overrightarrow{ov_1}$, and so it is not
$\mathbf{u}$-monotone. Consequently, the $\mathbf{u}$-monotone path $\xi$ leaves $v_j$
on the unique upward edge on the left side of $\overrightarrow{ov_j}$.
\end{proof}

\begin{claim}\label{cl:2}
If $\xi$ arrives at $v_1$ on an upward edge, then $v_3=b$ or $\xi$
leaves $v_3$ on an upward edge.
Similarly, if $\xi$ leaves $v_3$ on a downward edge, then $v_1=a$ or
$\xi$ arrives at $v_1$ on a downward edge.
\end{claim}
\begin{proof}
By symmetry, it is enough to prove the second statement. Assume that
$\xi$ leaves $v_3$ on a downward edge $(v_3,v_4)$. Then the directions of
$(v_3,v_4)$ and $(v_3,v_2)$ are both within $\pi/2^{\ell+1}$ from
$\overrightarrow{v_3o}$. Thus $\mathbf{u}^\perp$ is within $\pi/2^{\ell+1}$ from
$\overrightarrow{ov_3}$. This, in turn, implies that none of the upward edges
going into $v_1$ is $\mathbf{u}$-monotone, and the claim follows.
\end{proof}

We continue with the proof of Lemma~\ref{lem:modus}. If the path $\xi$
changes vertical direction at neither $v_1$ nor $v_3$, then
Claim~\ref{cl:1} implies that $\xi$ is composed of one upward path and
one downward path, contradicting our assumption that $\xi$ has no such
decomposition.

Assume that the path $\xi$ changes vertical direction at $v_1$ or $v_3$.
Without loss of generality, assume that $\xi$ changes vertical
direction at $v_3$, \ie, $\xi$ leaves $v_3$ on a downward edge.
By Claims~\ref{cl:1} and~\ref{cl:2}, the part of $\xi$ preceding $v_1$ is uniquely determined.
Note also that $\mathbf{u}^\perp$ is within $\pi/2^{\ell+1}$ from $\overrightarrow{ov_3}$.

We distinguish two cases based on whether the edge $(v_3,v_4)$ of $\xi$ following $v_3$
lies in the exterior of the triangle $\Delta{o v_1 v_3}$ or in its interior;
then we estimate the number of such maximal paths $\xi$ and their subpaths.

\smallskip\noindent{\bf Case~1: $v_4$ lies in the exterior of $\Delta{o v_1 v_3}$;}
refer to Figure~\ref{fig:v123}\,(left).
Then the vertices of $\xi$ following $v_4$ are uniquely determined,
analogously to Claim~\ref{cl:1}. We have $O(n)$ choices for $v_2$
(which determines the triple $(v_1,v_2,v_3)$), and
$O(\log n)$ choices for $v_4$. Consequently, $T$ contains
$O(n \log n)$ maximal monotone paths $\xi$ of this type.
The length of any such path is $O(\log n)$, and by Lemma~\ref{lem:maximalmon}(ii),
the total number of monotone paths of this type is  $O(n \log^3 n)$.

\smallskip\noindent{\bf Case~2: $v_4=o$ or $v_4$ lies in the interior of $\Delta{o v_1 v_3}$;}
refer to Figure~\ref{fig:v123}\,(right).
Then the path $\xi$ reaches the $\mathbf{u}$-maximal vertex of
$\Delta{o v_1 v_3}$, namely $o$. The portion of $\xi$ inside $\Delta{o v_1 v_3}$
is a downward path from $v_3$ to $o$. The portion of $\xi$ from
$o$ to $b$ must be an upward path. In summary, the maximal monotone path $\xi$
is composed of four upward or downward paths: A downward path from $a$ to $v_2$,
an upward edge $(v_2,v_3)$, a downward path from $v_3$ to $o$, and an
upward path from $o$ to $b$.

Let us count the number of maximal monotone paths $\xi$ of this type.
By Lemma~\ref{lem:upwardpath}, there are $O(n^{\log 3})$ choices
for the portion of $\xi$ from $o$ to $b$, and $O(n^{\log 3})$ independent
choices for the portion from $v_3$ to $o$. Since the degree of $v_3$
is $O(\log n)$, we have $O(\log n)$ choices for vertex $v_2$. Finally,
the portion from $a$ to $v_2$ is uniquely determined by $v_2$ by Claim~\ref{cl:1}.
This gives an $O(n^{2\log 3}\log n)$ bound on the number of maximal monotone
paths of this type. The length of any such path is $O(\log n)$.

By Lemma~\ref{lem:maximalmon}(ii), the total number of monotone paths of this type is
$O(n^{2\log 3} \log^3 n)$. By counting the subpaths of the maximal
monotone paths $\xi$ directly, we can reduce this bound by a logarithmic factor.
If a subpath of $\xi$ changes vertical directions twice, then its two
endpoints must lie in the first and last portion of $\xi$, respectively.
By Corollary~\ref{cor:from-o}, there are $O(n^{\log 3})$ choices for the portion of $\xi$
from $o$ to $b$ \emph{and} for all subpaths of $\xi$ incident to $o$.
The length of this last portion is $O(\log n)$, and so the number of its subpaths incident
to $v_2$ is $O(\log n)$. Consequently, the total number of monotone paths of this type
is $O(n^{2\log 3}\log^2 n)$.

\smallskip Summing over both cases, it follows that the number of monotone
paths that change vertical directions at least twice is $O(n^{2\log 3}\log^2 n)$, as claimed.
\end{proof}

\paragraph{Proof of Theorem~\ref{thm:monpath}.}
The triangulation $T$ contains $O(n^{\log 3}\log n)$ upward paths by
Lemma~\ref{lem:upwardpath}, and $O(n^{2\log 3}\log^2 n)$ paths composed
of one upward and one downward piece by Corollary~\ref{cor:upwardpath}.
Any other monotone path $\xi=(v_1,\ldots ,v_t)$ changes vertical
direction (upward vs downward) at least twice. By Lemma~\ref{lem:modus},
$T$ contains $O(n^{2\log 3}\log^2 n)$ such paths. Consequently, the total
number of monotone paths in $T$ is $O(n^{2\log 3}\log^2 n) = O(n^{3.17})$, as claimed.
\qed

\section{Conclusion}

We have derived estimates on the maximum and minimum number of star-shaped polygons,
monotone paths, and directed paths that a (possibly directed) triangulation of
$n$ points in the plane can have. Our results are summarized in
Tables~\ref{tab:results} and~\ref{tab:results-min} of Section~\ref{sec:intro}.
Closing or narrowing the gaps between the upper and lower bounds remain as interesting
open problems. The gaps in the last row of Table~\ref{tab:results} and
2nd row of Table~\ref{tab:results-min} are particularly intriguing.

\section*{Acknowledgments}

A. Dumitrescu was supported in part by NSF grant DMS-1001667.
M. L\"offler was supported by the Netherlands Organization for
Scientific Research (NWO) under grant 639.021.123. Research by T\'oth
was supported in part by NSERC (RGPIN 35586) and NSF (CCF-1423615).
This work was initiated at the workshop ``Counting and Enumerating %of
Plane Graphs,'' which took place at Schloss Dagstuhl in March, 2013.

\end {document}